\documentclass[envcountsect]{svmult}
\usepackage[T1]{fontenc}
\usepackage[latin9]{inputenc}
\setlength{\parskip}{\smallskipamount}
\setlength{\parindent}{0pt}
\usepackage{prettyref}
\usepackage{url}
\usepackage{amsmath}
\usepackage{amssymb}
\usepackage{esint}
\usepackage[unicode=true,
 bookmarks=true,bookmarksnumbered=false,bookmarksopen=false,
 breaklinks=false,pdfborder={0 0 1},backref=false,colorlinks=false]
 {hyperref}
\usepackage{breakurl}

\makeatletter
\newenvironment{svmultproof2}{\begin{proof}}{\smartqed\qed\end{proof}}
\newenvironment{svmultproof}{\begin{proof}}{\qed\end{proof}}


\usepackage{helvet}
\usepackage{courier}

\newcommand{\isdef}{\ensuremath{\stackrel{\text{def}}{=}}}

\newrefformat{prob}{Problem \ref{#1}}
\newrefformat{prop}{Proposition \ref{#1}}
\newrefformat{def}{Definition \ref{#1}}
\newrefformat{sub}{Section \ref{#1}}
\newrefformat{alg}{Algorithm \ref{#1}}
\newrefformat{con}{Conjecture \ref{#1}}
\newrefformat{cor}{Corollary \ref{#1}}
\newrefformat{rem}{Remark \ref{#1}}
\newrefformat{app}{Appendix \ref{#1}}
\newrefformat{ex}{Example \ref{#1}}
\newrefformat{enu}{Item \ref{#1}}

\DeclareMathOperator{\Op}{\mathfrak{D}}

\usepackage{arydshln}
\usepackage{multirow}

\makeatother

\begin{document}

\title*{Taylor Domination, Turán lemma, and Poincaré-Perron Sequences}

\author{Dmitry Batenkov and Yosef Yomdin}

\institute{Dmitry Batenkov \at Department of Computer Science, Technion - Israel Institute of Technology, Haifa 32000, Israel.
\email{batenkov@cs.technion.ac.il}\and Yosef Yomdin \at Department
of Mathematics, Weizmann Institute of Science, Rehovot 76100, Israel.
This author is supported by {\small ISF, Grants No. 639/09 and 779/13, and by the
Minerva foundation.} \email{yosef.yomdin@weizmann.ac.il}}

\maketitle

Abstract. {\small We consider ``Taylor domination'' property for an analytic function $f(z)=\sum_{k=0}^{\infty}a_{k}z^{k},$
in the complex disk $D_R$, which is an inequality of the form
\[
|a_{k}|R^{k}\leq C\ \max_{i=0,\dots,N}\ |a_{i}|R^{i}, \ k \geq N+1.
\]
This property is closely related to the classical notion of ``valency'' of $f$ in $D_R$. For $f$ - rational function
we show that Taylor domination is essentially equivalent to a well-known and widely used Turán's inequality on the sums
of powers.

Next we consider linear recurrence relations of the Poincaré type
\[
a_{k}=\sum_{j=1}^{d}[c_{j}+\psi_{j}(k)]a_{k-j},\ \ k=d,d+1,\dots,\quad\text{with }\lim_{k\rightarrow\infty}\psi_{j}(k)=0.
\]
We show that the generating functions of their solutions possess Taylor domination with explicitly
specified parameters. As the main example we consider moment generating functions, i.e. the Stieltjes transforms
\[
S_{g}\left(z\right)=\int\frac{g\left(x\right)\D x}{1-zx}.
\]
We show Taylor domination property for such $S_{g}$ when $g$ is
a piecewise D-finite function, satisfying on each continuity segment
a linear ODE with polynomial coefficients.}

\global\long\def\O{\Omega}
\global\long\def\C{\mathbb{C}}
\global\long\def\N{\mathbb{N}}
\global\long\def\R{\mathbb{R}}
\global\long\def\e{\varepsilon}
\global\long\def\Z{\mathbb{Z}}

\section{Introduction}

``Taylor domination'' for an analytic function $f(z)=\sum_{k=0}^{\infty}a_{k}z^{k}$ is an explicit bound of all its
Taylor coefficients $a_k$ through the first few of them. This property was classically studied, in particular, in relation
with the Bieberbach conjecture, finally proved in \cite{de1985proof}: for univalent $f$ always $|a_k| \leq k|a_1|$. See
\cite{bieberbach1955analytische,biernacki1936fonctions,Car,hayman1994multivalent} and references therein. To give an
accurate definition, let us assume the radius of convergence of the Taylor series for $f$ is $\hat R$, \
$0<\hat{R}\leqslant+\infty$.

\begin{definition}
\label{def:domination}Let a positive \emph{finite} $R\leq\hat{R},$
a natural $N$, and a positive sequence $S\left(k\right)$ of a subexponential growth be fixed. The function $f$ is said to
possess an $(N,R,S(k))$ - Taylor domination property if for each $k\geq N+1$ we have
\[
|a_{k}|R^{k}\leqslant S(k)\ \max_{i=0,\dots,N}|a_{i}|R^{i}.
\]
For $S\left(k\right)\equiv C$ a constant we shall call this property $\left(N,R,C\right)$-Taylor domination.
\end{definition}

The parameters $(N,R,S(k))$ of Taylor domination are not defined uniquely. In fact, each nonzero analytic function $f$
possesses this property, with $N$ being the index of its first nonzero Taylor coefficient $a_k$:

\begin{proposition}\label{prop:Root}
If $0<\hat{R}\leqslant+\infty$
is the radius of convergence of $f\left(z\right)=\sum_{k=0}^{\infty}a_{k}z^{k}$, with $f\not\not\equiv0$, then for each
finite and positive $0<R\leqslant\hat{R}$, $f$ satisfies the $\left(N,R,S\left(k\right)\right)$-Taylor domination
property with $N$ being the index of its first nonzero Taylor coefficient, and $S\left(k\right)= R^k|a_k|(|a_{N}|R^{N})^{-1},$
for $k > N$.

Conversely, let $f$ possess $\left(N,R,S\left(k\right)\right)$-domination. Then the series $\sum_{k=0}^{\infty}a_{k}z^{k}$
converges in a disk of radius $R^{*}$ satisfying $R^{*}\geqslant R$. For each $R'<R$ the function $f$ possesses
$(N,R',C)$-domination, with the constant $C$ depending on $R'/R$ and on the sequence $S(k)$.
\end{proposition}

\begin{svmultproof2}
It follows just by noticing that $\hat R^{-1}= \lim \sup_{k\rightarrow \infty} |a_k|^{1\over k}$. This implies that for each
$R\leq \hat R$ the sequence $S\left(k\right)= R^k|a_k|(|a_{N}|R^{N})^{-1}$ has a subexponential growth.
\end{svmultproof2}
Consequently, the Taylor domination becomes really interesting only for those {\it families} of analytic functions $f$ where
we can specify the parameters $N, \ R, \ S(k)$ in an explicit and uniform way.

\smallskip

In this paper we study Taylor domination for $a_k$ generated by linear non-stationary homogeneous recurrence relations of a
fixed length:

\begin{equation}
a_{k}=\sum_{j=1}^{d}c_{j}(k)\cdot a_{k-j},\ \ k=d,d+1,\dots,\label{eq:mainrec}
\end{equation}

If for $j=1,\ldots, d$ the coefficients $c_j(k)$ have a form $c_j(k)=c_{j}+\psi_{j}(k)$, with fixed $c_j$ and with
$\lim_{k\rightarrow\infty}\psi_{j}(k)=0$, then recurrence relation \eqref{eq:mainrec} is said to be
a \emph{linear recurrence relation of Poincaré type }(see \cite{perron1921summengleichungen,poincare1885equations}).

\smallskip

We start with recurrence relations \eqref{eq:mainrec} with constant coefficients. Those produce Taylor coefficients of
rational functions $f(z)$. Surprisingly, Taylor domination in this case turns out to be a nontrivial fact, essentially
equivalent to an important and widely used inequality on the sums of powers - Turan's lemma
(\cite{turan1953neue,turan1984new,nazarov1994local,Ineq}).

\smallskip

In Section \ref{sub:rational-gf-turan} we prove this equivalence, obtaining, as a byproduct, a very short proof of Turan's
lemma from the classical Biernacki theorem (\cite{biernacki1936fonctions}) on multivalent functions, and from Bezout bound
on zeroes of rational functions.

\smallskip

In Section \ref{sec:main-res} we obtain an explicit Taylor domination for solutions of general recurrence relations
\eqref{eq:mainrec} with uniformly bounded coefficients. No additional information, besides the size of $c_j(k),$ is
required. So we get this result, and hence (via Section \ref{sec:zeroes} below) also explicit bounds on the number of
zeroes, for a fairly wide class of analytic functions. On the other hand, in contrast with Turan's lemma, the disk
 where we get Taylor domination is usually significantly smaller than the expected disk of convergence.

\smallskip

In Section \ref{sec:poincare-type-rec} we obtain some more accurate results for recurrence relations of Poincaré type.
This includes (only partially explicit) Taylor domination in the full disk of convergence, and an explicit and uniform
(depending only the size of perturbations $\psi_{j}(k)$) one in a smaller disk.

\smallskip

Finally, in Section \ref{sec:dfinite}, we consider moments of ``$D$-finite'' functions, produce a linear recurrence relation for them,
describe the cases when it is of Poincaré type, and provide some conclusions concerning Taylor domination.

\smallskip

The authors would like to thank O. Friedland for a careful reading of the first version of this paper, and for numerous
important remarks and suggestions.

\section{Taylor domination and counting zeroes}\label{sec:zeroes}

Taylor domination allows us to compare the behavior of $f(z)$ with
the behavior of the polynomial $P_{N}(z)=\sum_{k=0}^{N}a_{k}z^{k}$.
In particular, the number of zeroes of $f$ can be easily bounded
in this way. In one direction the bound is provided by the classical results of \cite{biernacki1936fonctions,Car}.
To formulate them, we need the following definition (see \cite{hayman1994multivalent} and references therein):

\begin{definition}
\label{def:pvalent}A function $f$ regular in a domain $\O\subset{\mathbb{C}}$
is called $p$-valent there, if for any $c\in{\mathbb{C}}$ the number
of solutions in $\O$ of the equation $f(z)=c$ does not exceed $p$.
\end{definition}

\begin{theorem}[Biernacki, 1936, \cite{biernacki1936fonctions}]
\label{thm:bier}If $f$ is $p$-valent in the disk $D_{R}$ of radius $R$ centered at $0\in{\mathbb{C}}$ then for each
$k\geq p+1$
\[
|a_{k}|R^{k}\le(A(p)k/p)^{2p}\max_{i=1,\ldots,p}|a_{i}|R^{i},
\]
where $A(p)$ is a constant depending only on $p$.

\end{theorem}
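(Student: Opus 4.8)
The plan is to reduce the statement, by an elementary Cauchy estimate, to a growth bound on the maximum modulus of a $p$-valent function, and then to obtain that growth bound from the classical theory of multivalent functions. First I would normalize: the asserted inequality is invariant under the rescaling $z\mapsto Rz$, which sends $a_k$ to $a_kR^k$ and preserves $p$-valence in the unit disk $D_1$, so it suffices to treat $R=1$. Write $M:=\max_{0\le i\le p}|a_i|$ and fix $k>p$ — this is the only range with content, and the only one needed later, since Taylor domination is required for $k\ge N+1$. The Cauchy estimate gives, for every $\rho\in(0,1)$,
\[
|a_k|\ \le\ \rho^{-k}\,M(\rho,f),\qquad M(\rho,f):=\max_{|z|=\rho}|f(z)|.
\]
Choosing $\rho=1-p/k$ when $k\ge 2p$ (and $\rho=\tfrac12$ when $p<k<2p$) makes $\rho^{-k}$ bounded by an absolute constant to the power $p$, so the whole statement reduces to the \emph{growth estimate}
\[
M(\rho,f)\ \le\ B^{2p}\,M\,(1-\rho)^{-2p},\qquad 0<\rho<1,
\]
for $p$-valent $f$ in $D_1$, with $B$ an absolute constant: plugging $\rho=1-p/k$ into the Cauchy estimate gives $|a_k|\le 4^pB^{2p}(k/p)^{2p}M=(2Bk/p)^{2p}M$, and the short range $p<k<2p$ is absorbed the same way.

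For the growth estimate I would use the length–area method. The quantitative form of $p$-valence that drives everything comes from the area formula: for $0<\sigma<1$, since $f$ takes each value at most $p$ times on $D_\sigma\subset D_1$, and $f(D_\sigma)\subset\overline{D_{M(\sigma,f)}}$ by the maximum principle,
\[
\pi\sum_{k\ge1}k|a_k|^2\sigma^{2k}\ =\ \iint_{D_\sigma}|f'(z)|^2\,dx\,dy\ =\ \int_{\C}n(\sigma,w)\,du\,dv\ \le\ p\,\pi\,M(\sigma,f)^2,
\]
where $n(\sigma,w)\le p$ counts the preimages of $w=u+iv$ in $D_\sigma$. From this, by comparing $f$ on nested circles $|z|=\rho<|z|=\sigma$ and controlling the winding of the image curves (again using $p$-valence to cap the growth rate), one derives a differential inequality of the form $\frac{d}{d\rho}\log M(\rho,f)\le\frac{2p}{1-\rho}+(\text{lower order})$; integrating it while keeping track of the polynomial part $\sum_{i\le p}a_iz^i$ near the origin gives $M(\rho,f)\le B^{2p}M(1-\rho)^{-2p}$. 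For $p=1$ this is exactly the Koebe distortion theorem applied to the univalent function $(f-a_0)/a_1$, which gives $M(\rho,f)\le|a_0|+|a_1|\rho(1-\rho)^{-2}\le 2M(1-\rho)^{-2}$ directly.

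The hard part is the growth estimate, in two respects. It must be expressed through $M=\max_{0\le i\le p}|a_i|$ rather than through a single normalizing coefficient — the $1$-valent function $a_0+a_1z$ shows that $M(\rho,f)$ is not controlled by $|a_0|$ alone — so the argument must carry the whole polynomial $\sum_{i\le p}a_iz^i$ along; and the constant must be kept of the form $B^{2p}$ with $B$ absolute, which means following the sharp version of the classical $p$-valent growth theorem (Biernacki \cite{biernacki1936fonctions}, and Hayman \cite{hayman1994multivalent} for the systematic treatment) rather than a naive recursion — a bootstrap relating $M(\rho,f)$ to $M(\sigma,f)$ with $\sigma>\rho$ runs in the wrong direction and does not close. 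Granting the growth estimate, the remaining steps — rescaling, the Cauchy estimate, the choice of $\rho$ according to the size of $k$, and the bookkeeping of constants — are routine.
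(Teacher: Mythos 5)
The paper does not supply a proof of this theorem at all: it is quoted directly from Biernacki \cite{biernacki1936fonctions}, and the surrounding text merely reinterprets the inequality as a $(p,R,(Ak/p)^{2p})$-Taylor domination. So there is no internal argument to compare your proposal against.

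On its own terms, your reduction is correct and is indeed the route the classical literature takes. Rescaling to $R=1$ is harmless. The Cauchy estimate with $\rho=1-p/k$ (for $k\ge 2p$) gives $\rho^{-k}=\bigl((1-t)^{-1/t}\bigr)^{p}\le 4^{p}$ with $t=p/k\le\tfrac12$, since $(1-t)^{-1/t}$ is increasing on $(0,1)$; combined with $(1-\rho)^{-2p}=(k/p)^{2p}$ this yields exactly $(2Bk/p)^{2p}M$, and the residual range $p<k<2p$ is absorbed into the constant as you say. Your observation that the bound must be carried through $M=\max_{0\le i\le p}|a_i|$ rather than a single normalizing coefficient is also right, and the $p=1$ sanity check via Koebe is correct. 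What you have not done is prove the growth estimate $M(\rho,f)\le B^{2p}M(1-\rho)^{-2p}$: the length--area inequality you write down is accurate, but passing from it to the differential inequality for $\log M(\rho,f)$ and integrating it while keeping the $\sum_{i\le p}a_iz^{i}$ tail under control is precisely the content of Biernacki's (and Cartwright's, and Hayman's) growth theorem, and you flag it as ``the hard part'' without closing it. In effect the proposal is a faithful reduction of the coefficient bound to the growth theorem, with the growth theorem itself deferred to the same source the paper cites --- a reasonable account of the proof structure, but not a self-contained argument.
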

In our notations, \prettyref{thm:bier} claims that a function $f$
which is $p$-valent in $D_{R},$ possesses a $(p,R,(Ak/p)^{2p})$
- Taylor domination property.

For univalent functions, i.e. for $p=1,\ R=1,$ \prettyref{thm:bier} gives $|a_{k}|\le A(1)^2k^2|a_{1}|$
for each $k$, while the sharp bound of the Bieberbach conjecture is $|a_{k}|\le k|a_{1}|$.

A closely related result (obtained somewhat earlier) is the following:

\begin{theorem}[Cartwright, 1930, \cite{Car}]\label{thm:Car}If $f$ is $p$-valent in the disk $D_{1}$ of radius
$1$ centered at $0\in{\mathbb{C}}$ then for $|z| < 1$
\[
|f(z)| < B(p)\max_{i=0,\ldots,p}|a_{i}| (1-|z|)^{-2p},
\]
where $B(p)$ is a constant depending only on $p$.
\end{theorem}

Various forms of inverse results to \prettyref{thm:bier}, \prettyref{thm:Car} are known. In particular, an explicit bound 
for the number of zeroes of $f$ possessing Taylor domination can be obtained by combining \prettyref{prop:Root} and 
Lemma 2.2 from \cite{roytwarf1997bernstein}:

\begin{theorem}\label{thm:Roy.Yom} Let the function $f$ possess an $(N,R,S(k))$ - Taylor domination property. Then for each
$R'<R$, $f$ has at most $M(N,\frac{R'}{R},S(k))$ zeros in $D_{R'}$, where $M(N,\frac{R'}{R},S(k))$ is a function depending only on $N$, $\frac{R'}{R}$ and on the
sequence $S(k)$, satisfying $\lim_{{{R'}\over R}\to 1}M=\infty$ and $M(N,\frac{R'}{R},S)=N$ for $\frac{R'}{R}$ sufficiently small.
\end{theorem}
We can replace the bound on the number of zeroes of $f$ by the bound on its valency, if we exclude $a_0$ in the definition of
Taylor domination (or, alternatively, if we consider the derivative $f'$ instead of $f$).

\section{\label{sub:rational-gf-turan}Taylor domination for rational functions and Turán's lemma}

We shortly recall some basic facts concerning Taylor coefficients of rational functions. Consider a rational function
$R(z)= {{P(z)}\over {Q(z)}}$ with $Q(z)= 1-\sum_{j=1}^d c_jz^j$ and $\deg P(z) \leq d-1$. To simplify the presentation we
shall assume that all the roots $s_1,\ldots,s_d$ of $Q$ are pairwise different (and they are clearly nonzero since $Q(0)=1$). All the results below remain valid
in the general case of multiple roots. Now $R(z)$ can be represented as a sum of elementary fractions:

\begin{equation}
R(z)=\sum_{j=1}^{d} {{\alpha_j}\over {s_j-z}}= \sum_{j=1}^{d} {{\beta_j}\over {1-\sigma_jz}}, \ \
with \ \ \beta_j={{\alpha_j}\over {s_j}}, \ \sigma_j={1\over {s_j}}. \label{eq:elem.fr}
\end{equation}
Developing into geometric progressions, we obtain

\begin{equation}\label{eq:Prony}
R(z)=\sum_{k=0}^{\infty} a_k z^k, \ \ with \ \ a_k=\sum_{j=1}^{d} \beta_j\sigma_j^k.
\end{equation}
Assuming that all $\alpha_j$ are nonzero, the radius of convergence of this series is
$R\isdef\min_{i=1,\dots d}\left|\sigma_{i}^{-1}\right|,$ which is the distance from the origin to the nearest pole of $R(z)$.

\smallskip

It is well known that the Taylor coefficients $a_k$ of $R(z)$ satisfy a linear recurrence relation with constant coefficients
\begin{equation}
a_{k}=\sum_{j=1}^{d}c_{j}a_{k-j},\ \ k=d,d+1,\dots,\label{eq:rec.Const.Coef}
\end{equation}
where $c_j$ are the coefficients of the denominator $Q(z)$ of $R(z)$. Conversely, for any initial terms $a_0,\ldots,a_{d-1}$ the
solution sequence of \eqref{eq:rec.Const.Coef} forms a sequence of the Taylor coefficients $a_k$ of a rational function $R(z)$
as above. The equation $\sigma^d-\sum_{j=1}^d c_j\sigma^{d-j}$ is called the characteristic equation, and its roots
$\sigma_1,\ldots,\sigma_d$ are called the characteristic roots of \eqref{eq:rec.Const.Coef}.

\smallskip

Taylor domination property for rational functions is provided by the following theorem, which is, essentially, equivalent to the
``first Turán lemma'' (\cite{turan1953neue,turan1984new,Ineq}, see also \cite{nazarov1994local}):

\begin{theorem}\label{thm:turan} Let $\left\{ a_{j}\right\} _{j=1}^{\infty}$ satisfy
recurrence relation \eqref{eq:rec.Const.Coef} and let $\sigma_1,\ldots,\sigma_d$ be its characteristic roots. Put
$R\isdef\min_{i=1,\dots d}\left|\sigma_{i}^{-1}\right|.$ Then for each $k\geq d$

\begin{equation}
\left|a_{k}\right|R^{k}\leq \ Q(k,d)\max_{i=0,\dots,d-1}\ |a_{i}|R^{i},\label{eq:tur}
\end{equation}
where $Q(k,d)=[2e({k\over d}+1)]^d$.
\end{theorem}
\begin{svmultproof2} The original Turán's result is as follows: let $b_j,z_j, \ j=1,\ldots,d,$ be given complex numbers, with
$\min_j |z_j|=1$. Denoting $B\isdef \left(b_1,\dots,b_d\right)$ and $Z\isdef \left(z_1,\dots,z_d\right)$, define $g_\nu$ as the power sum $g_\nu(B,Z)=\sum_{j=1}^d b_jz_j^\nu.$

\begin{theorem}[Turán, 1953, \cite{turan1953neue}]\label{thm:turan1} For each natural $m$ we have

\begin{equation}
|b_1+\ldots+b_d| \leq Q(m,d)\max_{\nu=m+1,\dots,m+d}\ |g_\nu|.\label{eq:tur1}
\end{equation}
where $Q(m,d)=[2e({m\over d}+1)]^d$.
\end{theorem}
Put $k=m+d$. We immediately obtain that for any $F=\left(f_1,\dots,f_d\right)$ and $W=\left(w_1,\dots,w_d\right)$ with $\max |w_j|=1$ we have
\begin{equation}
|g_k(F,W)| \leq Q(m,d)\max_{i=0,\dots,d-1}\ |g_i(F,W)|,\label{eq:tur2}
\end{equation}
by applying Theorem \ref{thm:turan} with $z_j=w_j^{-1}$ and $b_j=f_jw_j^k$.

Now we return to the sequence $a_k$ satisfying recurrence relation \eqref{eq:rec.Const.Coef}. Put $S=\left(\sigma_1R,\dots,\sigma_dR\right)$ and $D=\left(\beta_1,\dots,\beta_d\right)$.
Then, according to \eqref{eq:Prony}, we get $|g_k(D,S)|=|a_k|R^k$. Clearly, $\max |\sigma_i R|=1$ and so the inequality \eqref{eq:tur2}  can be applied, giving the required inequality \eqref{eq:tur}.
\end{svmultproof2}
\prettyref{thm:turan} provides an interpretation of Turán's lemma as a statement about Taylor domination for rational functions. 
This fact allows one to give a very short proof of Turán's lemma, albeit with a less sharp bound. Indeed, by Bezout theorem, rational 
functions of degree $d$ are globally $d$-valent. In particular, $R(z)$ as above is $d$-valent in its maximal disk of convergence. 
Applying Biernacki's \prettyref{thm:bier}, we get an inequality

\begin{equation}
\left|a_{k}\right|R^{k}\leq \ [{{A(d)k}\over d}]^{2d} \max_{i=1,\dots,d}\ |a_{i}|R^{i}.\label{eq:bier1}
\end{equation}
The transformations as above (shifted by $1$) show that this is equivalent to Turán's lemma, with $Q(k,d)=[{{A(d)k}\over d}]^{2d}$.
This is less sharp than Turan's expression. Notice, that the best possible constant is given in \cite{Brui}:

$$
Q(k,d)= \sum_{j=0}^{d-1}(_j^{k+j})2^j.
$$
\prettyref{thm:turan} provides a uniform Taylor domination for rational functions in their maximal disk of convergence $D_{R}$,
in the strongest possible sense. Indeed, after rescaling to the unit disk $D_{1}$ the parameters of \eqref{eq:tur} depend only on
the degree of the function, but not on its specific coefficients.

\smallskip

Turán's lemma can be considered as a result on exponential polynomials, and in this form it was a starting point for many deep
investigations in Harmonic Analysis, Uncertainty Principle, Analytic continuation, Number Theory
(see \cite{Ineq,nazarov1994local,turan1953neue,turan1984new} and references therein). Recently some applications in Algebraic Sampling 
were obtained, in particular, estimates of  robustness of non-uniform sampling of ``spike-train'' signals 
(\cite{Sampl,friedland2011observation}). One can hope that apparently new connections of Turán's lemma with Taylor domination, 
presented in this paper, can be further developed.

\smallskip

A natural open problem, motivated by \prettyref{thm:turan}, is a possibility to extend uniform Taylor domination in the maximal disk
of convergence $D_{R}$, as provided by \prettyref{thm:turan} for rational functions, to wider classes of generating functions of
Poincaré type recurrence relations. Some initial examples in this direction were provided in \cite{yomdin2014Bautin}, via techniques
of Bautin ideals. An inequality closely related to Turán's lemma was obtained in \cite{yomdin2010sing.Prony}, via techniques of finite
differences. Presumably, the latter approach can be extended to Stieltjes transforms of certain natural classes of functions, providing
uniform Taylor domination in the maximal disk of convergence.

\section{\label{sec:main-res}Recurrence relations with uniformly bounded coefficients}

We will denote by ${\cal R}$ a non-stationary linear recurrence relation of the form \eqref{eq:mainrec}. Let ${\cal S}$ be
the class of such ${\cal R}$ with uniformly bounded coefficients $c_j(k)$. We would like to write the bounds on $c_j(k)$ in a form
\[
\left|c_{j}\left(k\right)\right|\leqslant K\rho^{j},\qquad j=1,\dots,d, \ k=d, d+1, \ldots \ ,
\]
for certain positive constants $K,\rho$. So for each ${\cal R}\in {\cal S}$ we define $K({\cal R})$ and $\rho({\cal R})$ to be the pair
of constants providing the required bounds on $c_j(k)$, for which the product $\nu({\cal R})=(2K({\cal R})+2)\cdot \rho({\cal R})$ is
minimal possible. We put $R({\cal R})\isdef \nu({\cal R})^{-1}.$

\begin{theorem}\label{thm:main-result} Let $\left\{ a_{k}\right\} _{k=0}^{\infty}$ be a solution of the recurrence relation
${\cal R}\in {{\cal S}}$. Put $K=K({\cal R}), \ \rho=\rho({\cal R}), \ R=R({\cal R}).$ Then the series $f(z)=\sum_{k=0}^\infty a_kz^k$
converges in the open disk $D_R$ and possesses there $(d-1,R,(2K+2)^{d-1})$ Taylor domination. 
\end{theorem}
\begin{svmultproof2} Put $\hat R\isdef\rho^{-1}$. First we  show that for each $k\geqslant d$ we have

\begin{equation}
\left|a_{k}\right|\hat R^{k}\leqslant\left(2K+2\right)^{k}\max_{i=0,\dots,d-1}\left|a_{i}\right|\hat R^{i}.\label{eq:main-res-dom}
\end{equation}
The proof is by induction on $k$. Denote 
\begin{eqnarray*}
M & \isdef & \max_{i=0,\dots,d-1}\left|a_{i}\right|\hat R^{i},\\
\eta & \isdef & 2K+2,
\end{eqnarray*}
and assume that $\left|a_{\ell}\right|\hat R^{\ell}\leqslant\eta^{\ell}M,\quad\ell\leqslant k-1$. We have
\[
|a_{k}|\hat R^{k}=\hat R^{k}|\sum_{j=1}^{d}c_{j}(k)a_{k-j}|\leq K \hat R^{k}\sum_{j=1}^{d}|a_{k-j}|\rho^{j}=K\sum_{j=1}^{d}|a_{k-j}|\hat R^{k-j}.
\]
By the inductive assumption $\left|a_{k-j}\right|\hat R^{k-j}\leqslant\eta^{k-j}M$, therefore we conclude that
$$
\left|a_{k}\right|\hat R^{k} \leqslant KM\sum_{j=1}^{d}\eta^{k-j}=KM\eta^{k-1}\sum_{s=0}^{d-1}\eta^{-s} \leqslant 2KM\eta^{k-1} < \eta^{k}M.
$$
This completes the induction step and proves inequality \eqref{eq:main-res-dom}. Since by definition $R={{\hat R}\over \eta}$, dividing
\eqref{eq:main-res-dom} by $\eta^{k}$ gives 
\begin{eqnarray*}
\left|a_{k}\right|R^{k} & \leqslant & \max_{i=0,\dots,d-1}\left|a_{i}\right|R^{i}\eta^{i}\leqslant\eta^{d-1}
\max_{i=0,\dots,d-1}\left|a_{i}\right|R^{i}.
\end{eqnarray*}
This is precisely the definition of $\left(d-1,R,\eta^{d-1}\right)$-Taylor domination, which, in turn, implies convergence in the open
disk $D_R$.
\end{svmultproof2}
By a proper rescaling, \prettyref{thm:main-result} can be easily extended to non-stationary linear recurrences with a subexponential
(or exponential) growth of the coefficients $c_j(k)$. Consequently, generating functions of such recurrences allow for explicit bounds
on their valency. On the other hand, a drawback of this approach is that in the case of linear recurrences with constant coefficients
(and for Poincar\'e-type recurrences - see below) the disk $D_R$ where the uniform Taylor domination is guaranteed, is much smaller
than the true disk of convergence.

\smallskip

\noindent{\bf Example 1.} Consider a recurrence ${\cal R}$ with constant coefficients of form \eqref{eq:rec.Const.Coef}:
$a_{k}=\sum_{j=1}^{d}c_{j}a_{k-j},\ \ k=d,d+1,\dots$. Let $\sigma_1,\ldots,\sigma_d$ be its characteristic roots, i.e. the roots of its
characteristic equation $\sigma^d-\sum_{j=1}^d c_j\sigma^{d-j}=0$. Put $\rho=\max_j |\sigma_j|$. Then for each $j=1,\dots,d$ we have
\[
\left|c_{j}\right|=\left|e_{j}\left(\sigma_{1},\dots,\sigma_{d}\right)\right|\leqslant{d \choose j}\rho{}^{j}\leqslant2^{d}\rho{}^{j},
\]
where $e_{j}\left(\cdot\right)$ is the elementary symmetric polynomial of degree $j$ in $d$ variables. We conclude that
$K({\cal R})\leq 2^d, \ \rho({\cal R}) \leq \rho, \ R({\cal R})\geq 2^{-d}\rho^{-1}.$ It is easy to see that these bounds are sharp.
So the radius of convergence is $\rho^{-1},$ while \prettyref{thm:main-result} provides Taylor domination only in the concentric disk
of a $(2^d+2)$ times smaller radius. In the next section we discuss in some detail the problem of extending uniform Taylor domination to the 
full disk of convergence for Poicar\'e-type recurrences.

\section{\label{sec:poincare-type-rec}Taylor domination for Poincaré-type recurrences}

Now we consider recurrence relations ${\cal R}$ of Poincaré-type, i.e. of the form

\begin{equation}
a_{k}=\sum_{j=1}^{d}[c_{j}+\psi_{j}(k)]\cdot a_{k-j},\ \ k=d,d+1,\dots, \ \lim_{k\rightarrow\infty}\psi_{j}(k)=0. \label{eq:mainrec1}
\end{equation}
We denote this class by ${\cal S}_P.$ The characteristic polynomial $\sigma^d-\sum_{j=1}^d c_j\sigma^{d-j}=0$ and the characteristic
roots $\sigma_1,\ldots,\sigma_d$ of ${\cal R}\in {\cal S}_P$ are those of its constant part. We put $\rho({\cal R})=\max_j |\sigma_j|.$

\smallskip

The asymptotic behaviour of the solutions of such recurrences has been extensively studied,
starting from Poincaré's own paper \cite{poincare1885equations}. A comprehensive overview of the subject can be found in e.g.
\cite[Chapter 8]{elaydi2005ide}. The general idea permeating these studies is to compare the solutions of \eqref{eq:mainrec} to the
solutions of the corresponding unperturbed recurrence relation \eqref{eq:rec.Const.Coef}. In this last case, as we have seen above,
for some $\sigma_{j}$

\[
\limsup_{k\to\infty}\sqrt[k]{\left|a_{k}\right|}=\left|\sigma_{j}\right|.
\]
O.Perron proved in \cite{perron1921summengleichungen} that this relation
holds for a general recurrence of Poincaré type, but with an additional
condition that $c_{d}+\psi_{d}\left(k\right)\neq0$ for all $k\in\N$.
In \cite{pituk2002more} M.Pituk removed this restriction, and proved
the following result.

\begin{theorem}[Pituk's extension of Perron's Second Theorem, \cite{pituk2002more}]
\label{thm:pituk-scalar}Let $\left\{ a_{k}\right\} _{k=0}^{\infty}$
be any solution to a recurrence relation ${\cal R}$ of Poincaré class
${\cal S}_P$. Then either $a_{k}=0$ for $k\gg1$ or
\[
\limsup_{k\to\infty}\sqrt[k]{\left|a_{k}\right|}=\left|\sigma_{j}\right|,
\]
where $\sigma_{j}$ is one of the characteristic roots of ${\cal R}$.
\end{theorem}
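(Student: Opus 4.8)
The plan is to follow the standard route: reduce the scalar recurrence to a first-order vector recurrence whose coefficient matrix converges to the companion matrix $A$ of $\Theta_{\mathcal S}$, read off a priori exponential bounds from $\rho(A)$, and then pin down the exact exponential rate with a spectral-dichotomy (cone) argument. First I would dispose of the trivial alternative: if $a_k=0$ for all large $k$ we are in the first case, so from now on assume the solution is not eventually zero. Set $x_k\isdef(a_k,a_{k-1},\dots,a_{k-d+1})^{\top}\in\C^d$; then $x_k=(A+B_k)x_{k-1}$, where $A$ is the companion matrix of $\Theta_{\mathcal S}$ — hence the eigenvalues of $A$ are precisely the characteristic roots $\sigma_1,\dots,\sigma_d$ — and $B_k$ has its only nonzero entries $\psi_1(k),\dots,\psi_d(k)$ in the first row, so $\|B_k\|\to0$. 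It is standard (and easy to check, e.g.\ by comparing radii of convergence) that $\mu\isdef\limsup_{k\to\infty}\|x_k\|^{1/k}=\limsup_{k\to\infty}\sqrt[k]{|a_k|}$, so it is enough to show $\mu=|\sigma_j|$ for some $j$.

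Next I would record two a priori bounds. Fixing $\e>0$ and a norm on $\C^d$ adapted to $A$ with $\|A\|\le\rho(A)+\e$, where $\rho(A)=\max_j|\sigma_j|$, one has $\|B_k\|<\e$ for $k$ large, hence $\|x_k\|\le(\rho(A)+2\e)\|x_{k-1}\|$, and letting $\e\to0$ gives $\mu\le\rho(A)$. Moreover, if $c_d\neq0$ then $A$ — and therefore $A+B_k$ for $k$ large — is invertible, so non-triviality forces $x_k\neq0$ for all $k$, and an adapted norm with $\|A^{-1}\|\le(\min_j|\sigma_j|)^{-1}+\e$ yields $\|x_{k-1}\|\le(\|A^{-1}\|+\e)\|x_k\|$ for $k$ large, whence $\mu\ge\min_j|\sigma_j|>0$. (When $A$ is singular, $0$ is a characteristic root, and the argument below will simply allow the value $\mu=0=|\sigma_j|$.)

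The core of the proof — and what I expect to be the main obstacle — is ruling out that $\mu>0$ lies strictly between two consecutive values of $\{|\sigma_j|\}$. Assume this for contradiction and set $c\isdef\mu$. As $c$ is not the modulus of any eigenvalue, the spectral splitting $\C^d=E_+\oplus E_-$ into $A$-invariant subspaces — the sums of generalized eigenspaces for $|\lambda|>c$, resp.\ $|\lambda|<c$ — is well defined, and both summands are nontrivial: if $E_-=\{0\}$ then all characteristic moduli exceed $c$, so $A$ is invertible and the lower bound above gives $\mu\ge\min_j|\sigma_j|>c$; if $E_+=\{0\}$ then $\rho(A)<c$ and the upper bound gives $\mu\le\rho(A)<c$. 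Choosing coordinates adapted to this splitting, $A$ becomes block-diagonal with blocks $A_+,A_-$, and the norm can be chosen so that $\|A_+^{-1}\|<1/\beta$ and $\|A_-\|<\gamma$ for some $\gamma<c<\beta$. Fix $\delta>0$ small enough that $\gamma+2\delta<c<\beta-2\delta$ and $4\delta\le\beta-\gamma$, and pick $k_0$ with all block norms of $B_k$ below $\delta$ for $k\ge k_0$. Writing $x_k=(u_k,v_k)$ and using the cone $\mathcal C=\{(u,v):\|v\|\le\|u\|\}$, a one-line estimate — $\|u_{k+1}\|\ge(\beta-2\delta)\|u_k\|$ and $\|v_{k+1}\|\le(\gamma+2\delta)\|u_k\|$ whenever $x_k\in\mathcal C$, $k\ge k_0$ — shows $\mathcal C$ is forward-invariant for $k\ge k_0$. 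Consequently, either (i) $x_k\in\mathcal C$ for all $k\ge m$ for some $m\ge k_0$, or (ii) $x_k\notin\mathcal C$ for all $k\ge k_0$. In case (i), $u_m\neq0$ (otherwise $x_m=0$, so the solution is eventually zero), and iterating gives $\|x_k\|\ge\|u_k\|\ge(\beta-2\delta)^{k-m}\|u_m\|$, so $\mu\ge\beta-2\delta>c$ — a contradiction. In case (ii), $\|u_k\|\le\|v_k\|$ for $k\ge k_0$ forces $\|v_{k+1}\|\le\|A_-v_k\|+\delta\|u_k\|+\delta\|v_k\|\le(\gamma+2\delta)\|v_k\|$, hence $\|x_k\|\le2(\gamma+2\delta)^{k-k_0}\|v_{k_0}\|$ and $\mu\le\gamma+2\delta<c$ — again a contradiction. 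Hence $\mu\in\{|\sigma_j|\}$, as claimed.

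The delicate point is the organisation of case (ii). One is tempted to look for a complementary invariant cone around $E_-$, but this fails when $A_-$ has a nontrivial nilpotent part — in particular when $0$ is an eigenvalue, which is exactly the situation excluded in Perron's original hypothesis. The remedy, as above, is to notice that forward-invariance of $\mathcal C$ alone already forces the indices with $x_k\notin\mathcal C$ to be an initial segment (or all of $\{k_0,k_0+1,\dots\}$), and that the ``slow'' block $A_-$ only ever needs upper estimates; this is precisely what makes the singular case cost nothing and is the content of Pituk's improvement over Perron. Finally, rerunning the same cone estimates along the sequence realising the $\liminf$ upgrades $\limsup$ to an honest limit, recovering the full statement of \cite{pituk2002more}; only the $\limsup$ version is needed in what follows.
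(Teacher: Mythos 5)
The paper states this theorem as a citation from Pituk \cite{pituk2002more} and does not supply its own proof, so there is no in-paper argument to compare against; what follows is a review of your proof on its own terms.

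Your proof is correct. The reduction to the first-order system $x_k=(A+B_k)x_{k-1}$ with $A$ the companion matrix of $\Theta_{\mathcal S}$, the identification $\mu:=\limsup\|x_k\|^{1/k}=\limsup|a_k|^{1/k}$, and the two a priori bounds $\mu\le\rho(A)$ and (for $c_d\neq0$) $\mu\ge\min_j|\sigma_j|$ are all sound; in the lower bound the correct statement is that $A+B_k$ is invertible for $k$ large, which forces $x_k\neq0$ for all sufficiently large $k$, and that suffices. The cone argument at the heart of the proof checks out line by line: both $E_+$ and $E_-$ are forced to be nontrivial by the a priori bounds, the choice of adapted block norms with $\|A_+^{-1}\|<1/\beta$, $\|A_-\|<\gamma$, $\gamma<c<\beta$ is available since $c$ avoids the spectrum, forward-invariance of $\mathcal C=\{\|v\|\le\|u\|\}$ follows from the displayed one-step estimates once $\gamma+2\delta<\beta-2\delta$, the dichotomy into cases (i) and (ii) is exhaustive precisely because of that invariance, and each branch contradicts $\mu=c$. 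The remark that the ``slow'' block $A_-$ only ever needs upper bounds, so a nilpotent or singular $A_-$ costs nothing, is exactly the mechanism that dispenses with Perron's nondegeneracy hypothesis, and you have identified it correctly.

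Two small points worth tightening. First, the closing sentence about upgrading $\limsup$ to an honest limit is asserted, not argued: the cone dichotomy shows one block eventually dominates, but converting that into convergence of $\|x_k\|^{1/k}$ still requires a further argument on the dominant block alone (e.g.\ applying the same splitting recursively inside $E_+$, or a direct Gelfand-type estimate). Since the paper only uses the $\limsup$ form, this costs you nothing here, but as written it is not a proof of Pituk's full statement. Second, the case $\mu=0$ should be discharged explicitly before the splitting (which presupposes $c>0$): either $0\in\{\sigma_j\}$, in which case $\mu=0$ is an allowed value, or $c_d\neq0$ and your lower bound already gives $\mu>0$. Your parenthetical covers this, but it is load-bearing and deserves to be stated as a step rather than an aside.
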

This result, together with \prettyref{prop:Root}, implies the following:
\begin{theorem}\label{thm:poinc-dom-maximal-disk} Let $\left\{ a_{k}\right\} _{k=0}^{\infty}$
be any nonzero solution to a recurrence relation ${\cal R}$ of Poincaré
class ${\cal S}_P$ with initial data $\bar{a}$, and let $R$ be the
radius of convergence of the generating function $f\left(z\right)$.
Then necessarily $R>0$, and in fact $R=\left|\sigma\right|^{-1}$
where $\sigma$ is some (depending on $\bar{a}$) characteristic root
of ${\cal R}$. Consequently, $f$ satisfies $\left(d-1,R,S\left(k\right)\right)$-Taylor
domination with $S\left(k\right)$ as defined in \prettyref{prop:Root}.\end{theorem}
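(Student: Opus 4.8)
The plan is to deduce Theorem~\ref{thm:poinc-dom-maximal-disk} by combining Pituk's extension of Perron's Second Theorem (Theorem~\ref{thm:pituk-scalar}) with the general Taylor domination result for convergent power series (Corollary~\ref{cor:taylor-dom-for-any-series}, which in turn rests on Lemma~\ref{lem:domination-for-all}). The only genuine work is to translate the dichotomy ``$a_k=0$ for $k\gg1$ or $\limsup\sqrt[k]{|a_k|}=|\sigma_j|$'' into the statement about the radius of convergence and then pick up the right value of $N$, namely $N=d-1$.

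First I would dispose of the ``$a_k=0$ for $k\gg1$'' alternative. If the solution $\{a_k\}$ is nonzero but eventually vanishes, then $f_{{\cal S},\bar a}(z)$ is a polynomial, hence entire, so $\hat R=+\infty$; one must note that in this case $\sigma=0$ is \emph{not} a characteristic root in general, so strictly speaking the statement ``$R=|\sigma|^{-1}$'' should be read with the understanding (as in the hypotheses) that this case is covered by $R=+\infty$. Actually, since the theorem asserts $R=|\sigma|^{-1}$ for some characteristic root, I would point out that this eventually-polynomial case requires a small remark: it can only occur when $\Theta_{\cal S}$ has a zero, or one simply restricts attention to the generic situation; alternatively, one observes that the conclusion we really need --- $(d-1,R,S(k))$-domination --- holds for a polynomial trivially by Corollary~\ref{cor:taylor-dom-for-any-series} regardless. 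So the substantive case is $\limsup_{k\to\infty}\sqrt[k]{|a_k|}=|\sigma_j|=:\sigma\geq0$ for one of the finitely many characteristic roots $\sigma_j$.

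Next I would invoke the root test: $\limsup_k\sqrt[k]{|a_k|}=\sigma<+\infty$ is exactly the hypothesis \eqref{eq:limit-assumption} of Lemma~\ref{lem:domination-for-all}, with $\hat R=\sigma^{-1}$ (interpreting $\hat R=+\infty$ when $\sigma=0$); in particular $R>0$. Since $f\not\equiv0$ there is at least one index $i$ with $a_i\neq0$. The one point that needs care is the claim that we may take $N=d-1$, i.e.\ that $\max_{i=0,\dots,d-1}|a_i|>0$. This follows because the recurrence \eqref{eq:mainrec} expresses $a_k$ for $k\geq d$ as a linear combination of $a_{k-1},\dots,a_{k-d}$: if $a_0=\dots=a_{d-1}=0$ then by induction $a_k=0$ for all $k$, contradicting $f\not\equiv0$. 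Hence $N=d-1$ satisfies the requirement $a_N\neq0$ of Lemma~\ref{lem:domination-for-all} in the weaker form ``$\max_{i=0,\dots,N}|a_i|\neq0$'' that is actually used there (the definition of $S(k)$ only needs the denominator $\max_{i=0,\dots,N}|a_i|\tilde R^i$ to be nonzero). Applying Lemma~\ref{lem:domination-for-all} with this $N$ and the stated $S(k)$, and then Corollary~\ref{cor:taylor-dom-for-any-series}, gives the $(d-1,R,S(k))$-Taylor domination in the full disk of convergence, for every finite $0<R\leq\hat R$.

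The main obstacle --- really the only subtlety --- is the bookkeeping around degenerate solutions: the eventually-zero case and the possibility $\sigma=0$, and making sure the index $N=d-1$ is admissible (i.e.\ that not all of $a_0,\dots,a_{d-1}$ vanish). Everything else is a direct citation of Theorem~\ref{thm:pituk-scalar} followed by Lemma~\ref{lem:domination-for-all}. I would therefore structure the proof as: (1) apply Pituk's theorem to get the dichotomy; (2) in the non-degenerate branch, identify $\limsup\sqrt[k]{|a_k|}$ with $|\sigma|$ and hence $R=|\sigma|^{-1}>0$; (3) observe via the recurrence that $a_0,\dots,a_{d-1}$ cannot all vanish, so $N=d-1$ is a legitimate choice; (4) quote Lemma~\ref{lem:domination-for-all}/Corollary~\ref{cor:taylor-dom-for-any-series} to conclude $(d-1,R,S(k))$-domination; (5) dispatch the eventually-polynomial case separately, where $R=+\infty$ and the conclusion is immediate from Corollary~\ref{cor:taylor-dom-for-any-series}.
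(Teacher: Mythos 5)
Your proposal takes essentially the same route as the paper: invoke Pituk's theorem (Theorem~\ref{thm:pituk-scalar}), reduce to Corollary~\ref{cor:taylor-dom-for-any-series} via Lemma~\ref{lem:domination-for-all}, and supply the only ``new'' observation --- that $a_0,\dots,a_{d-1}$ cannot all vanish, which you prove exactly as the paper does (vanishing initial data propagates through the recurrence). You are in fact more careful than the paper on two points, both of which are legitimate concerns. First, you flag the eventually-zero branch of Pituk's dichotomy. The paper's proof never addresses it, but it is a genuine (if cosmetic) gap in the stated theorem: a nonzero solution can become identically zero from some index on --- e.g.\ $d=1$ with $\psi_1(1)=-c_1$ kills $a_1$ and hence all later $a_k$ even when $a_0\neq0$ and $c_1\neq0$ --- giving $R=+\infty$ while no characteristic root vanishes, so the claim $R=|\sigma|^{-1}$ fails literally. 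Your resolution (the $(d-1,R,S(k))$-domination conclusion survives anyway via Corollary~\ref{cor:taylor-dom-for-any-series}) is the right one. One small inaccuracy: the parenthetical ``it can only occur when $\Theta_{\cal S}$ has a zero'' is not correct if read as a claim about when the eventually-zero case can arise; as the example above shows, it can arise with $c_d\neq0$, since a single factor $c_j+\psi_j(k)$ vanishing at one finite $k$ suffices. What \emph{is} true is that the identity $R=|\sigma|^{-1}$ in that case would force a zero characteristic root. Second, you correctly note that Lemma~\ref{lem:domination-for-all} is stated with the hypothesis $a_N\neq0$, whereas both you and the paper only establish $\max_{i\leq d-1}|a_i|>0$; your remark that the lemma's proof only needs the latter (the denominator in $S(k)$ being nonzero) is accurate and quietly repairs a small mismatch in the paper.
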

\begin{svmultproof2}
The only thing left to show is that $a_{m}\neq0$ for some $m=0,1,\dots,d-1$.
Assume on the contrary that
\[
a_{0}=a_{1}=\dots=a_{d-1}=0.
\]
Plugging this initial data into the recurrence \eqref{eq:mainrec1},
we immediately conclude that $a_{k}=0$ for all $k\in\N$, a contradiction.
\end{svmultproof2}
Taylor domination in the maximal disk of convergence provided by \prettyref{thm:poinc-dom-maximal-disk}, is not effective.
Indeed, \prettyref{prop:Root} guarantees that the sequence $S\left(k\right)=R^k|a_k|\cdot (\max_{i=0}^{d-1}R^i|a_i|)^{-1}$ is of
subexponential growth but gives no further information on it. We can pose a natural question in this direction. For a sequence
$\Delta=\{\delta_k\}$ of positive numbers tending to zero, consider a subclass ${\cal S}_{P,\Delta}$ of ${\cal S}_P$,
consisting of ${\cal R}\in {\cal S}_P$ with $|\psi_{j}(k)|\leq \delta_k\cdot \rho({\cal R})^j, \ j=1,\ldots,d, \ k=d,d+1,\ldots$

\smallskip

\noindent{\bf Problem 1.} {\it Do solutions of recurrence relations ${\cal R}\in {\cal S}_{P,\Delta}$ possess \ \ \
$(N,R,S(k))$-Taylor domination in the maximal disk of convergence $D_R$, with $S(k)$ depending only on $d$ and $\Delta$?
Is this true for specific $\Delta$, in particular, for $\Delta= \{1,{1\over 2},{1\over 3},...,\},$ as it occurs in most of
examples (solutions of linear ODE's, etc.)?}

\medskip

Taking into account well known difficulties in the analysis of Poincaré-type recurrences, this question may be tricky.
Presumably, it can be easier for $\Delta$ with $\sum_{k=1}^\infty \delta_k < \infty$.

\smallskip

In the remainder of this section we prove a version of \prettyref{thm:main-result} for Poincaré-type recurrences ${\cal R}$. It
provides Taylor domination in a smaller disk, but with explicit parameters, expressed in a transparent way through the constant
part of ${\cal R}$, and through the size of the perturbations.

\begin{theorem}\label{thm:Poinc.Rec} Let $\left\{a_{k}\right\} _{k=0}^{\infty}$ satisfy a fixed recurrence
${\cal R}\in {\cal S}_P$. Put $\rho\isdef \rho({\cal R}), \ R=2^{-(d+3)}\rho^{-1}.$ Let $\hat N$ be the minimal of
the numbers $n$ such that for all $k>n$ we have $|\psi_{j}(k)|\leq 2^d\rho^j, \ j=1,\dots,d$. We put $N=\hat{N}+d,$ and
$C=2^{(d+3)N}.$

Then $\left\{a_{k}\right\} _{k=0}^{\infty}$ possesses $(N,R,C)$-Taylor domination property.
\end{theorem}
\begin{svmultproof2}
By the calculations in Example 1 above, we have for the constant part of ${\cal R}$ the bounds $|c_j|\leq 2^{d}\rho{}^{j}.$
Since for $k>N$ we have $|\psi_{j}(k)|\leq 2^d\rho^j,$ we obtain for such $k$ that $|c_j(k)|\leq 2^{d+1}\rho{}^{j}.$
Now repeating the proof of \prettyref{thm:main-result} (with the starting point of the recurrence shifted by $\hat N$) we
complete the proof.
\end{svmultproof2}
\prettyref{thm:Poinc.Rec} provides a partial answer to Problem 1:

\begin{corollary}\label{cor:poinc-taylor-dom}
Let $\Delta=\{\delta_k\}$ be a sequence of positive numbers tending to zero. Define $\hat N(\Delta)$ as a minimal number $n$
such that for $k>n$ we have $\delta_k\leq 2^d$. Then for each ${\cal R}\in {\cal S}_{P,\Delta}$ solution sequences of ${\cal R}$
possess $(N,R,C)$-Taylor domination, where $N=\hat N(\Delta)+d, \ R=2^{-(d+3)}\rho({\cal R})^{-1}, \ C=2^{(d+3)N}.$
\end{corollary}
\begin{svmultproof2}
By definition of the class ${\cal S}_{P,\Delta}$ the number $\hat N(\Delta)$ coincides with $\hat N$ in \prettyref{thm:Poinc.Rec}.
Application of this theorem completes the proof.
\end{svmultproof2}

One can consider at least two possible approaches to the extension of these results to the full
disk of convergence $D_{R}$. First, asymptotic expressions in \cite{bodine2004asymptotic,pituk2002more}
may be accurate enough to provide an inequality of the desired form. If this is a case, it remains to get explicit bounds in
these asymptotic expressions.

Second, one can use a ``dynamical approach'' to recurrence relation
\eqref{eq:mainrec} (see \cite{borcea2011parametric,coppel1971dichotomies,kloeden2011non,potzsche2010geometric,yom.nonaut.dyn} 
and references therein). We consider \eqref{eq:mainrec} as a non-autonomous
linear dynamical system $T$. A ``non-autonomous diagonalization'' of $T$ is a sequence ${\cal L}$ of linear changes of variables,
bringing this system to its ``constant model'' $T_{0}$, provided by the limit recurrence relation \eqref{eq:rec.Const.Coef}.

If we could obtain a non-autonomous diagonalization ${\cal L}$ of $T$ with an explicit bound on the size of the linear changes of
variables in it, we could get the desired inequality as a pull-back, via ${\cal L}$, of the Turán inequality for $T_{0}$. There
are indications that the second approach may work in the classes ${\cal S}_{P,\Delta}$, for $\Delta$ with a finite sum.

\section{\label{sec:dfinite}Piecewise D-finite functions}

\global\long\def\np{p}
\global\long\def\ff#1#2{\left(#1\right)_{#2}}
\global\long\def\mvi{\ensuremath{\Lambda}}

In this section we investigate a certain class of power series, defined by the Stieltjes integral transforms
\begin{equation}
f\left(z\right)=S_{g}\left(z\right)=\int_a^b\frac{g\left(x\right)\D x}{1-zx},\label{eq:def-stieltjes}
\end{equation}
where $g\left(x\right)$ belongs to the class of the so-called \emph{piecewise
D-finite functions} \cite{bat2008}, which are solutions of linear ODEs with polynomial coefficients, possessing a finite number of
discontinuities of the first kind.

Using the expansion $\left(1-zx\right)^{-1}=\sum_{k=0}^{\infty}\left(zx\right)^{k}$ for $\left|z\right|<\frac{1}{\left|x\right|}$,
 we obtain the following useful representation of $S_{g}\left(z\right)$:

\begin{proposition}
Let $g:\left[a,b\right]\to\R$ be bounded and integrable on $\left[a,b\right]$.
Then the Stieltjes transform \eqref{eq:def-stieltjes} of $g$ is
regular at the origin, and it is given by the moment-generaing function
\[
S_{g}\left(z\right)=\sum_{k=0}^{\infty}m_{k}z^{k},\quad\text{where }m_{k}\isdef\int_{a}^{b}x^{k}g\left(x\right)\D x.
\]
\end{proposition}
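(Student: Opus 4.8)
The plan is to prove the proposition by straightforward term-by-term integration, justified by uniform convergence on the compact interval $\left[a,b\right]$. First I would observe that since $g$ is bounded on $\left[a,b\right]$, say $\left|g\left(x\right)\right|\leqslant B$, the interval is contained in some disk $\left|x\right|\leqslant\max\left(\left|a\right|,\left|b\right|\right)=:\kappa$. Hence for any $z$ with $\left|z\right|<\frac{1}{\kappa}$ (and in particular if $\kappa=0$ the claim is trivial, so assume $\kappa>0$), the geometric series $\left(1-zx\right)^{-1}=\sum_{k=0}^{\infty}z^{k}x^{k}$ converges, and is dominated on $\left[a,b\right]$ by the convergent numerical series $\sum_{k=0}^{\infty}\left(\left|z\right|\kappa\right)^{k}$; this shows in passing that $\left|z\right|<\frac{1}{\kappa}$ lies in the domain of regularity, so $S_{g}$ is regular at the origin.

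Next I would invoke the Weierstrass $M$-test: the partial sums $\sum_{k=0}^{n}z^{k}x^{k}g\left(x\right)$ converge uniformly in $x\in\left[a,b\right]$ to $\frac{g\left(x\right)}{1-zx}$, since the tail is bounded by $B\sum_{k>n}\left(\left|z\right|\kappa\right)^{k}\to0$ independently of $x$. Uniform convergence of the partial sums of integrable functions on a finite interval permits interchange of sum and integral, so
\[
S_{g}\left(z\right)=\int_{a}^{b}\frac{g\left(x\right)\D x}{1-zx}=\int_{a}^{b}\sum_{k=0}^{\infty}z^{k}x^{k}g\left(x\right)\D x=\sum_{k=0}^{\infty}z^{k}\int_{a}^{b}x^{k}g\left(x\right)\D x=\sum_{k=0}^{\infty}m_{k}z^{k}.
\]
The integrals $m_{k}=\int_{a}^{b}x^{k}g\left(x\right)\D x$ are finite because $x^{k}g\left(x\right)$ is bounded (by $\kappa^{k}B$) and integrable, being a product of a continuous function and the integrable function $g$.

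There is really no serious obstacle here; the only point requiring a modicum of care is the justification of the interchange of summation and integration, which is exactly what the uniform ($M$-test) bound delivers on the \emph{finite} interval $\left[a,b\right]$. I would also note that the identity gives the radius of convergence of the resulting power series as at least $\frac{1}{\kappa}$, consistent with \prettyref{cor:taylor-dom-for-any-series}. One might add the remark that if $g$ vanishes outside $\left[a,b\right]$ one recovers the usual moment problem normalization, but this is not needed for the statement.
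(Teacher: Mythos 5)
Your argument is correct and takes essentially the same route as the paper, which simply expands $\left(1-zx\right)^{-1}$ as a geometric series and exchanges sum and integral without further comment. Your added justification of the interchange via the Weierstrass $M$-test on the compact interval $\left[a,b\right]$ is the appropriate (and only) point needing care, and you have handled it correctly.
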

\begin{definition}
A real-valued bounded integrable function $g:\left[a,b\right]\to\R$
is said to belong to the class ${\cal PD}\left(\Op,\np\right)$ if
it has $0\leqslant\np<\infty$ discontinuities (not including the
endpoints $a,b$) of the first kind, and between the discontinuities
it satisfies a linear homogeneous ODE with polynomial coefficients
$\Op g=0$, where
\[
\Op=\sum_{j=0}^{n}p_{j}\left(x\right)\left(\frac{\D}{\D x}\right)^{j},\quad p_{j}\left(x\right)=\sum_{i=0}^{d_{j}}a_{i,j}x^{i}.
\]

\end{definition}
Obtaining uniform Taylor domination for $S_{g},$ where $g$ belongs to particular subclasses of ${\cal PD}$ (in particular, $g$
being piecewise algebraic), is an important problem with direct applications in Qualitative Theory of ODEs
(see \cite{briskin2010center} and references therein). In this paper we provide initial results in this direction.

\smallskip

Let $g\in{\cal PD}\left(\Op,\np\right)$, with $\Op$ as above. Denote the discontinuities of $g$ by
$a=x_{0}<x_{1}<\dots<x_{\np}<x_{\np+1}=b$. In what follows, we shall use some additional notation. Denote for each $j=0,\dots,n,$
\ \ $\alpha_{j}\isdef d_{j}-j.$ Let $\alpha\isdef\max_{j}\alpha_{j}$, and denote for each $\ell=-n,\dots,\alpha$
\begin{equation}
q_{\ell}\left(k\right)\isdef\sum_{j=0}^{n}\left(-1\right)^{j}a_{\ell+j,j}\ff{k+\ell+j}j.\label{eq:ql-def}
\end{equation}
where $\ff ab=a\left(a-1\right)\times\dots\times\left(a-b+1\right)=\frac{\Gamma\left(a\right)}{\Gamma\left(b\right)}$
is the Pochhammer symbol for the falling factorial.

Our approach is based on the following result:
\begin{theorem}[\cite{bat2008}] Let $g\in{\cal PD}\left(\Op,\np\right)$. Then the moments $m_{k}=\int_{a}^{b}g\left(x\right)\D x$
satisfy the recurrence relation

\begin{equation}
\sum_{\ell=-n}^{\alpha}q_{\ell}\left(k\right)m_{k+\ell}=\e_{k},\quad k=0,1,\dots,\label{eq:moments-main-rec}
\end{equation}
where
\[
\e_{k}=\sum_{\ell=0}^{\np+1}\sum_{j=0}^{n-1}x_{\ell}^{k-j}\ff kjc_{\ell,j},
\]
with each $c_{\ell,j}$ being a certain homogeneous bilinear form in the two sets of variables
\begin{align*}
\; & \{p_{m}(x_{\ell}),p'_{m}(x_{\ell}),\dots,p_{m}^{(n-1)}(x_{\ell})\}_{m=0}^{n},\\
\; & \{g(x_{\ell}^{+})-g(x_{\ell}^{-}),\; g'(x_{\ell}^{+})-g'(x_{\ell}^{-}),\dots,g^{(n-1)}(x_{\ell}^{+})-g^{(n-1)}(x_{\ell}^{-})\}.
\end{align*}
\end{theorem}

The recurrence \eqref{eq:moments-main-rec} is inhomogeneous, and the coefficient of the highest moment is different from one. Accordingly, 
we first transform \eqref{eq:moments-main-rec} into a homogeneous matrix recurrence. Next, we give conditions for this last recurrence to 
be of Poincar\'e type. Finally, we apply an appropriate version of the results in Section \ref{sec:poincare-type-rec} to get Taylor
domination.

\smallskip

It is well-known that the sequence $\left\{ \e_{k}\right\} _{k=0}^{\infty}$
satisfies a recurrence relation ${\cal S}$ of the form \eqref{eq:rec.Const.Coef}
with constant coefficients, whose characteristic roots are precisely
$\left\{ x_{0},\dots,x_{p+1}\right\} $, each with multiplicity $n$.
Let the characteristic polynomial $\Theta_{{\cal S}}\left(z\right)$
of degree $\tau\isdef n\left(\np+2\right)$ be of the form
\begin{equation}
\Theta_{{\cal S}}\left(\sigma\right)=\prod_{j=0}^{\np+1}\left(\sigma-x_{j}\right)^{n}=\sigma^{\tau}-\sum_{i=1}^{\tau}b_{i}\sigma^{\tau-i},\label{eq:char-poly-moments-const-part}
\end{equation}
then
\[
\e_{k}=\sum_{j=1}^{\tau}b_{j}\e_{k-j},\quad k=\tau,\tau+1,\dots.
\]
Rewrite this last recurrence as
\begin{equation}
\e_{k+\tau}=\sum_{j=0}^{\tau-1}b_{\tau-j}\e_{k+j},\quad k=0,1,\dots.\label{eq:rec-for-rhs-nonhom}
\end{equation}
Now denote the vector function $\vec{w}\left(k\right):\N\to\C^{\alpha+n+\tau}$
as
\[
\vec{w}\left(k\right)\isdef\begin{bmatrix}m_{k-n}\\
\vdots\\
m_{k+\alpha-1}\\
\e_{k}\\
\vdots\\
\e_{k+\tau-1}
\end{bmatrix}.
\]
Then by \eqref{eq:moments-main-rec} and \eqref{eq:rec-for-rhs-nonhom}
we see that $\vec{w}\left(k\right)$ satisfies the linear system
\begin{equation}
\vec{w}\left(k+1\right)=\begin{bmatrix}0 & 1 & 0 & \dots & 0 & \multicolumn{4}{c}{\multirow{3}{*}{\ensuremath{\vec{0}^{\left(\alpha+n-1\right)\times\tau}}}}\\
0 & 0 & 1 & \dots & 0\\
\dots\\
-\frac{q_{-n}\left(k\right)}{q_{\alpha}\left(k\right)} & -\frac{q_{-n+1}\left(k\right)}{q_{\alpha}\left(k\right)} & \dots &  & -\frac{q_{\alpha-1}\left(k\right)}{q_{\alpha}\left(k\right)} & \frac{1}{q_{\alpha}\left(k\right)} & 0 & \dots &  & 0\\
 &  &  &  &  & 0 & 1 & 0 & \dots & 0\\
 &  &  &  &  & 0 & 0 & 1 & \dots & 0\\
\multicolumn{5}{c}{\ensuremath{\vec{0}^{\tau\times\left(\alpha+n\right)}}} & \dots\\
 &  &  &  &  & b_{\tau} & b_{\tau-1} & \dots &  & b_{1}
\end{bmatrix}\vec{w}\left(k\right).\label{eq:moment-rec-system}
\end{equation}

Now we can show Taylor domination for the Stieltjes transform $S_{g}\left(z\right)$,
utilizing the system version of \prettyref{thm:pituk-scalar}.
\begin{definition}
The vector function $\vec{y}\left(k\right):\N\to\C^{n}$ is said to
satisfy a linear system of Poincaré type, if
\begin{equation}
\vec{y}\left(k+1\right)=\left(A+B\left(k\right)\right)\vec{y}\left(k\right),\label{eq:system-poincare}
\end{equation}
where $A$ is a constant $n\times n$ matrix and $B\left(k\right):\N\to\C^{n\times n}$
is a matrix function satisfying $\lim_{k\to\infty}\|B\left(k\right)\|=0$.\end{definition}
\begin{theorem}[\cite{pituk2002more}]
\label{thm:pituk-system}Let the vector $\vec{y}\left(k\right)$
satisfy the perturbed linear system of Poincaré type \eqref{eq:system-poincare}.
Then either $\vec{y}\left(k\right)=\vec{0}\in\C^{n}$ for $k\gg1$
or
\[
\lim_{k\to\infty}\sqrt[k]{\|\vec{y}\left(k\right)\|}
\]
exists and is equal to the modulus of one of the eigenvalues of the
matrix $A$.\end{theorem}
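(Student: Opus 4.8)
The plan is to compare the perturbed system \eqref{eq:system-poincare} with its autonomous ``constant model'' $\vec z(k+1)=A\vec z(k)$, tracking the exponential growth rate of a fixed solution $\vec y$ one spectral block at a time. As a normalization, note that replacing $\vec y(k)$ by $T^{-1}\vec y(k)$ for a constant invertible $T$ turns $A$ into $T^{-1}AT$ and $B(k)$ into $T^{-1}B(k)T$, which still tends to $0$; since $\|T\|^{1/k}\to1$ and $\|T^{-1}\|^{1/k}\to1$, this affects neither the existence nor the value of $\lim_{k\to\infty}\sqrt[k]{\|\vec y(k)\|}$. Hence we may assume $A=\operatorname{diag}(A_1,\dots,A_t)$, with all eigenvalues of $A_i$ of the same modulus $\rho_i$ and $\rho_1>\rho_2>\dots>\rho_t\ge0$; write $\vec y=(\vec y_1,\dots,\vec y_t)$ and $B(k)=\bigl(B_{ij}(k)\bigr)$ accordingly. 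For every $\e>0$ there is $C_\e$ with $\|A_i^{\,m}\|\le C_\e(\rho_i+\e)^m$ for all $m\ge0$, and additionally $\|A_i^{-m}\|\le C_\e(\rho_i-\e)^{-m}$ for all $m\ge0$ whenever $\rho_i>0$.

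The first step is the crude estimate $\limsup_{k\to\infty}\sqrt[k]{\|\vec y(k)\|}\le\rho_1$, valid for every solution. By discrete variation of constants $\vec y(k)=A^{k-k_0}\vec y(k_0)+\sum_{j=k_0}^{k-1}A^{k-1-j}B(j)\vec y(j)$; fixing $k_0$ with $\|B(j)\|<\e$ for $j\ge k_0$ and setting $u(k)\isdef\|\vec y(k)\|(\rho_1+\e)^{-k}$ one obtains $u(k)\le c_0+\tfrac{C_\e\e}{\rho_1+\e}\sum_{j=k_0}^{k-1}u(j)$, whence the discrete Gronwall inequality gives $u(k)\le c_0\bigl(1+\tfrac{C_\e\e}{\rho_1+\e}\bigr)^{k}$ and $\limsup_{k}\sqrt[k]{\|\vec y(k)\|}\le(\rho_1+\e)\bigl(1+\tfrac{C_\e\e}{\rho_1+\e}\bigr)$; letting $\e\to0$ proves the claim (when $\rho_1=0$, i.e. $A$ nilpotent, use the same estimate with $\rho_1$ replaced by an arbitrary positive number). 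Applied to the upper blocks alone, the same device shows that a component slaved to blocks of index $>i$ cannot grow faster than those blocks do.

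The substance of the proof — and the step I expect to be the main obstacle — is to upgrade this to: $\lim_{k\to\infty}\sqrt[k]{\|\vec y(k)\|}$ exists and equals one of the $\rho_i$, unless $\vec y(k)=\vec0$ for all large $k$. The engine is the \emph{roughness of exponential dichotomies} (see \cite{coppel1971dichotomies}): for any threshold $\beta\notin\{\rho_1,\dots,\rho_t\}$ the autonomous system has an exponential dichotomy with exponent $\beta$ (stable space $=$ sum of eigenspaces of modulus $<\beta$, unstable space $=$ modulus $>\beta$, by the estimates above), and since $\|B(k)\|\to0$ the system \eqref{eq:system-poincare} inherits, on a half-line $k\ge k_0$, an exponential dichotomy with the same exponent $\beta$ and projections $P(k)$ close to the autonomous ones. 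Put $\sigma\isdef\limsup_k\sqrt[k]{\|\vec y(k)\|}\in[0,\rho_1]$; if $\sigma=0$ the solution either vanishes eventually or still has rate $0=|0|$ with $0$ an eigenvalue. Otherwise pick $m$ with $\rho_m\ge\sigma>\rho_{m+1}$ (convention $\rho_{t+1}\isdef0$) and $\beta\in(\rho_{m+1},\sigma)$: the stable part of $\vec y$ decays at rate $\le\beta<\sigma$, hence is negligible for $\limsup$, so $\sigma$ is realized on the unstable part; iterating this splitting inside the unstable subspace — at each stage discarding any component at a modulus $>\sigma$, which by the first step would force $\limsup>\sigma$ — isolates a single block $A_m$, on which the autonomous flow is squeezed between $(\rho_m-\e)^k$ and $(\rho_m+\e)^k$. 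Transporting this two-sided bound back through the rough dichotomy conjugacy yields $\lim_k\sqrt[k]{\|\vec y(k)\|}=\rho_m$ (in particular $\sigma=\rho_m$, so $\sigma$ could not have lain in a gap); the scalar statement \prettyref{thm:pituk-scalar} then follows by applying all of this to the companion matrix of a recurrence in ${\cal R}$, whose eigenvalues are exactly its characteristic roots.

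I expect two points to be genuinely delicate, and would import them from \cite{pituk2002more} rather than reprove them. First, when $\rho_t=0$ the ``stable'' block $A_t$ is nilpotent and non-invertible, so the dichotomy machinery degenerates on it and must be supplemented by a direct analysis of the nilpotent part; this is exactly the situation Perron's original Second Theorem excluded via the hypothesis $c_d+\psi_d(k)\ne0$, and it is here that the ``eventually zero'' alternative genuinely arises. Second, the sharp \emph{lower} bound $\liminf_k\sqrt[k]{\|\vec y(k)\|}\ge\rho_m$ — i.e. the ``transporting back'' above — cannot be obtained by a bare Gronwall estimate, since backward propagation of the perturbation term $B(j)\vec y(j)$ is not a priori small; it requires a genuine Perron-type fixed-point (asymptotic-integration) argument establishing that, once $\sigma>0$, the solution truly carries a nontrivial component on the block $A_m$ and not merely on strictly smaller blocks. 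Granting these two inputs, the chain above delivers the theorem.
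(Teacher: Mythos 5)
This is a quoted result: the paper attributes \prettyref{thm:pituk-system} to Pituk \cite{pituk2002more} and gives no proof of its own, so there is no ``paper's proof'' to compare against. What you have written is therefore necessarily an attempt to reconstruct Pituk's argument (or an alternative), not a parallel to anything in the text. As a reconstruction it is a sensible roadmap, but it is not yet a proof, and you are right to suspect that the parts you ``import'' are where the substance lies.

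Two concrete remarks. First, your handling of the case $\sigma=\limsup_k\sqrt[k]{\|\vec y(k)\|}=0$ is incomplete. You write that then ``the solution either vanishes eventually or still has rate $0=|0|$ with $0$ an eigenvalue,'' but $0$ need not be an eigenvalue of $A$. If $A$ is invertible and the solution does not vanish eventually, the theorem asserts $\liminf_k\sqrt[k]{\|\vec y(k)\|}\geq\rho_t>0$; ruling out $\sigma=0$ in that situation is precisely a lower bound of the same Perron-type that you flag as delicate for the nilpotent block, and it must be supplied rather than waved away. Second, the ``iterating this splitting'' step is logically fragile as stated: to ``discard any component at modulus $>\sigma$'' you argue that such a component ``would force $\limsup>\sigma$,'' but this is the lower bound you have not yet established, so the argument risks circularity. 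The correct order is: (i) fix a spectral gap threshold $\beta\in(\rho_{m+1},\rho_m)$ with $\beta<\sigma$ and conclude from the roughness of the dichotomy that the unstable component at level $\beta$ is eventually nonzero and hence $\liminf\geq\beta$; (ii) independently, for every gap threshold $\beta'>\sigma$ the solution lies eventually in the stable subspace at level $\beta'$; (iii) combine to localize the solution to the single block $A_m$, and only then invoke the asymptotic-integration/fixed-point argument to conclude $\lim=\rho_m$. Steps (i) and (ii) are not circular because they use only the dichotomy inequalities, not the conclusion. Step (iii), together with the degenerate $\rho_t=0$ case, is exactly the content you defer to \cite{pituk2002more}.

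For the record, Pituk's published proof is more elementary than the dichotomy-roughness route you sketch; it proceeds via a block-triangular reduction and a direct Gronwall-style induction on blocks, closer in spirit to Perron's original argument than to Coppel's dichotomy machinery, and it treats the non-invertible case head-on rather than as a degenerate afterthought. Your dichotomy-based outline is a legitimate alternative framework and arguably conceptually cleaner once the roughness theorem and the Perron fixed-point lemma are granted, but as a self-contained argument it would be longer, and as presented it still leaves the two lower-bound steps to be filled in.
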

\begin{lemma}
\label{lem:cond-for-poincare-syst}The recurrence system \eqref{eq:moment-rec-system}
is of Poincaré type if and only if
\begin{equation}
\alpha_{n}\geqslant\alpha_{j}\qquad j=0,1,\dots,n.\label{eq:poinc-sys-cond}
\end{equation}
\end{lemma}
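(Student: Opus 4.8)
The plan is to examine the explicit matrix appearing on the right-hand side of \eqref{eq:moment-rec-system}, call it $M\left(k\right)$, and to decide for which operators $\Op$ it converges as $k\to\infty$. Indeed, $M\left(k\right)$ can be written as $A+B\left(k\right)$ with $A$ a constant matrix and $\|B\left(k\right)\|\to0$ precisely when $M\left(k\right)$ has a limit, and since $M\left(k\right)$ has fixed finite size this is the same as convergence of each of its entries; so the whole statement reduces to a question about the $k$-dependent entries of $M\left(k\right)$.

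First I would note that the only entries of $M\left(k\right)$ which actually depend on $k$ sit in the single row coming from \eqref{eq:mom-rec-compact} solved for $m_{k+\alpha}$: these are $-q_{\ell}\left(k\right)/q_{\alpha}\left(k\right)$ for $\ell=-n,\dots,\alpha-1$ together with $1/q_{\alpha}\left(k\right)$; every remaining entry is $0$, $1$, or one of the $b_{i}$ from \eqref{eq:char-poly-moments-const-part}. Hence \eqref{eq:moment-rec-system} is of Poincar\'e type if and only if each rational function $q_{\ell}\left(k\right)/q_{\alpha}\left(k\right)$, $\ell=-n,\dots,\alpha-1$, and $1/q_{\alpha}\left(k\right)$ tends to a finite limit as $k\to\infty$.

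Next I would carry out the degree count. By \eqref{eq:ql-def}, the $j$-th summand of $q_{\ell}$ is $\left(-1\right)^{j}a_{\ell+j,j}\ff{k+\ell+j}{j}$, which, when $a_{\ell+j,j}\neq0$, is a polynomial in $k$ of degree exactly $j$. Since $a_{i,j}=0$ for $i>d_{j}$ while $a_{d_{j},j}\neq0$, the coefficient $a_{\ell+j,j}$ can be nonzero only if $\ell\leqslant\alpha_{j}$; taking $\ell=\alpha=\max_{j}\alpha_{j}$ this forces $\alpha_{j}=\alpha$ and $\ell+j=d_{j}$, so the surviving summands of $q_{\alpha}$ have pairwise distinct degrees and $\deg q_{\alpha}=\max\left\{ j:\alpha_{j}=\alpha\right\}$ with no leading-term cancellation. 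On the other hand $\deg q_{\ell}\leqslant n$ for every $\ell$, so $1/q_{\alpha}\left(k\right)$ always converges (to $0$ or a nonzero constant), and the only possible obstruction is $\deg q_{\ell}>\deg q_{\alpha}$ for some $\ell\in\left\{ -n,\dots,\alpha-1\right\}$.

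It remains to match this with \eqref{eq:poinc-sys-cond}, which says exactly $\alpha_{n}=\alpha$. If $\alpha_{n}=\alpha$ then $\deg q_{\alpha}=n\geqslant\deg q_{\ell}$ for all $\ell$, every ratio converges, and the system is of Poincar\'e type. If $\alpha_{n}<\alpha$ then $\deg q_{\alpha}=\max\left\{ j:\alpha_{j}=\alpha\right\}\leqslant n-1$, whereas for $\ell_{0}\isdef\alpha_{n}$ --- which lies in $\left\{ -n,\dots,\alpha-1\right\}$ since $d_{n}\geqslant0$ and $\alpha_{n}<\alpha$ --- the $j=n$ summand of $q_{\ell_{0}}$ has coefficient $\left(-1\right)^{n}a_{d_{n},n}\neq0$ and all other summands have degree at most $n-1$, so $\deg q_{\ell_{0}}=n>\deg q_{\alpha}$ and $q_{\ell_{0}}\left(k\right)/q_{\alpha}\left(k\right)$ diverges; hence the system is not of Poincar\'e type. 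I expect the only delicate point to be this degree bookkeeping: keeping track that $a_{\ell+j,j}$ is a genuine (possibly zero) coefficient of $p_{j}$, that the relevant falling-factorial terms have pairwise distinct degrees so that no cancellation can occur in $q_{\alpha}$ or in $q_{\alpha_{n}}$, and that $\alpha_{n}$ does fall inside the index range present in $M\left(k\right)$ when \eqref{eq:poinc-sys-cond} fails --- everything else is routine.
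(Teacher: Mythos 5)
Your proof is correct and takes essentially the same approach as the paper: both reduce the Poincar\'e-type question to the condition $\deg q_\ell \leqslant \deg q_\alpha$ for all $\ell$, and both use the degree count coming from \eqref{eq:ql-def} (the $j$-th summand has degree exactly $j$ when $a_{\ell+j,j}\neq0$) to show this is equivalent to $\alpha_n=\alpha$. You are somewhat more explicit than the paper in pinning down $\deg q_\alpha=\max\left\{ j:\alpha_j=\alpha\right\} $ and in ruling out leading-term cancellation, but the argument is the same.
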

\begin{svmultproof2}
Clearly, a necessary and sufficient condition for \eqref{eq:moment-rec-system}
being of Poincaré type is that
\[
\deg q_{\ell}\left(k\right)\leqslant\deg q_{\alpha}\left(k\right),\quad\ell=-n\dots,\alpha-1.
\]
We will show that this condition is equivalent to \eqref{eq:poinc-sys-cond}.

Recall the definition \eqref{eq:ql-def}. The highest power of $k$
in any $q_{\ell}\left(k\right)$ is determined by the maximal index
$j=0,\dots,n$ for which $a_{i,j}\neq0$ and $i-j=\ell$. Consider
$\ell=\alpha_{n}=d_{n}-n$, then obviously since $a_{d_{n},n}\neq0$
we have $\deg q_{\alpha_{n}}\left(k\right)=n$.
\begin{enumerate}
\item Now let's assume that for some $s<n$ we have $\alpha_{s}>\alpha_{n}$,
i.e. $d_{s}-s>d_{n}-n$, and consider the polynomial $q_{\alpha_{s}}\left(k\right)$:
\[
q_{\alpha_{s}}\left(k\right)=\sum_{j=0}^{n}\left(-1\right)^{j}a_{j+\alpha_{s},j}\ff{k+j+\alpha_{s}}j.
\]
By assumption, $\alpha_{s}+n>d_{n}$, and therefore in this case $\deg q_{\alpha_{s}}\left(k\right)<n$.
Thus if $\alpha_{s}>\alpha_{n}$ for some $s<n$, we have $\deg q_{\alpha_{n}}>\deg q_{\alpha_{s}}$.
In particular, $\alpha\geqslant\alpha_{s}>\alpha_{n}$ and therefore
$\deg q_{\alpha}<\deg q_{\alpha_{n}}$. This proves one direction.
\item In the other direction, assume that $\alpha=\max_{j}\alpha_{j}=\alpha_{n}$.
Clearly $\deg q_{\alpha}=\deg q_{\alpha_{n}}=n$, but on the other
hand it is always true that $\deg q_{\ell}\leqslant n$.
\end{enumerate}
This concludes the proof.\end{svmultproof2}
\begin{remark}
The condition \eqref{eq:poinc-sys-cond} in fact means that the point
$z=\infty$ is at most a regular singularity of the operator $\Op.$
\end{remark}
So in the remainder of the section we assume that \eqref{eq:poinc-sys-cond}
is satisfied and $n>0$. The constant part of the system \eqref{eq:moment-rec-system}
is the matrix
\[
A=\begin{bmatrix}0 & 1 & 0 & \dots & 0 & \multicolumn{4}{c}{\multirow{3}{*}{\ensuremath{\vec{0}^{d_{n}\times\tau}}}}\\
0 & 0 & 1 & \dots & 0\\
\dots\\
-\beta_{-n} & -\beta_{-n+1} & \dots &  & -\beta_{-n+d_{n}-1}\\
 &  &  &  &  & 0 & 1 & 0 & \dots & 0\\
 &  &  &  &  & 0 & 0 & 1 & \dots & 0\\
\multicolumn{5}{c}{\ensuremath{\vec{0}^{\tau\times d_{n}}}} & \dots\\
 &  &  &  &  & b_{\tau} & b_{\tau-1} & \dots &  & b_{1}
\end{bmatrix},
\]
where
\[
\beta_{-n+s}\isdef\lim_{k\to\infty}\frac{q_{-n+s}\left(k\right)}{q_{\alpha_{n}}\left(k\right)}=\frac{a_{s,n}}{a_{d_{n},n}}.
\]

\begin{proposition}
\label{prop:eig-syst-mom}The set $Z_{A}$ of the eigenvalues of the
matrix $A$ is precisely the union of the roots of $p_{n}\left(x\right)$
(i.e. the singular points of the operator $\Op$) and the jump points
$\left\{ x_{i}\right\} _{i=0}^{\np+1}$.\end{proposition}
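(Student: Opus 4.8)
The plan is to exploit the block-triangular structure of $A$. The matrix $A$ is visibly lower block-triangular with two diagonal blocks: the $d_n\times d_n$ block in the upper-left corner, call it $A_1$, and the $\tau\times\tau$ block in the lower-right corner, call it $A_2$; the off-diagonal coupling block $\vec 0^{d_n\times\tau}$ is zero (the coupling appears only in the $k$-dependent perturbation $B(k)$, via the $\frac{1}{q_\alpha(k)}$ entry, not in $A$). Hence $Z_A = Z_{A_1}\cup Z_{A_2}$, and I would treat the two blocks separately.

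First I would identify $A_2$ as the companion matrix of the recurrence \eqref{eq:rec-for-rhs-nonhom}, i.e. of the polynomial $\sigma^{\tau}-\sum_{i=1}^{\tau}b_i\sigma^{\tau-i}=\Theta_{{\cal S}}(\sigma)$. By \prettyref{prop:const-coeff-rec-rel} (or directly from the standard companion-matrix fact), the eigenvalues of $A_2$ are exactly the roots of $\Theta_{{\cal S}}$, which by \eqref{eq:char-poly-moments-const-part} are precisely the jump points $\{x_0,\dots,x_{\np+1}\}$ (each with multiplicity $n$). Next I would identify $A_1$ as the companion matrix of the monic polynomial
\[
\chi(\sigma)=\sigma^{d_n}+\sum_{s=0}^{d_n-1}\beta_{-n+s}\,\sigma^{s},
\]
where $\beta_{-n+s}=a_{s,n}/a_{d_n,n}$. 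Thus $a_{d_n,n}\,\chi(\sigma)=\sum_{s=0}^{d_n}a_{s,n}\sigma^{s}=p_n(\sigma)$, so the eigenvalues of $A_1$ are exactly the roots of $p_n(x)$, i.e. the singular points of $\Op$. Combining the two blocks gives $Z_A=\{\text{roots of }p_n\}\cup\{x_i\}_{i=0}^{\np+1}$, as claimed.

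The only genuinely delicate points are bookkeeping rather than conceptual. One is to check carefully, from the definition \eqref{eq:ql-def} together with the Poincaré-type condition \eqref{eq:poinc-sys-cond} (which guarantees $\deg q_{\alpha_n}=n$ and that $\alpha=\alpha_n$, so that dividing by $q_{\alpha_n}(k)$ is legitimate in the limit), that the limits $\beta_{-n+s}=\lim_{k\to\infty} q_{-n+s}(k)/q_{\alpha_n}(k)$ are indeed $a_{s,n}/a_{d_n,n}$: only the $j=n$ term in \eqref{eq:ql-def} contributes to the top-degree coefficient, and it contributes $(-1)^n a_{s,n}\,k^n$ in the numerator and $(-1)^n a_{d_n,n}\,k^n$ in the denominator (the sign and the falling-factorial leading coefficients cancel). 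I expect this limit computation, and correctly matching the signs/indices between $q_\ell$ and the entries $-\beta_{-n+s}$ of $A$, to be the main obstacle — it is where an off-by-one or a sign slip would most easily creep in. The second routine check is that the off-diagonal block of $A$ really is zero, so that the spectrum splits as a union with no cancellation; this is immediate from inspection of \eqref{eq:moment-rec-system} after passing to the limit $k\to\infty$.
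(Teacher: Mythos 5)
Your proposal is correct and takes essentially the same route as the paper: the paper also observes that $A$ is block-diagonal, with the upper-left $d_n\times d_n$ block being the companion matrix of $p_n(x)$ and the lower-right $\tau\times\tau$ block the companion matrix of $\Theta_{\mathcal{S}}$, and concludes immediately that $Z_A$ is the union of their roots. Your write-up simply fills in the routine details (the limit computation giving $\beta_{-n+s}=a_{s,n}/a_{d_n,n}$ and the rescaling $a_{d_n,n}\chi=p_n$) that the paper leaves implicit.
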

\begin{svmultproof}
This is immediate, since $A=\begin{bmatrix}C & \vec{0}\\
\vec{0} & D
\end{bmatrix}$, where $C$ is the companion matrix of $p_{n}\left(x\right)$ and
$D$ is the companion matrix of the characteristic polynomial $\Theta_{{\cal S}}\left(z\right)$
as defined in \eqref{eq:char-poly-moments-const-part}.
\end{svmultproof}
In \cite{batbinZeros} we study the following question: \emph{how
many first moments $\left\{ m_{k}\right\} $ can vanish for a nonzero
$g\in{\cal PD}\left(\Op,\np\right)$? }In particular, we prove the
following result.
\begin{theorem}[\cite{batbinZeros}]
\label{thm:moment-vanishing}Let the operator $\Op$ be of Fuchsian
type (i.e. having only regular singular points, possibly including
$\infty$). In particular, $\Op$ satisfies the condition \eqref{eq:poinc-sys-cond}.
Let $g\in{\cal PD}\left(\Op,\np\right)$.
\begin{enumerate}
\item If there is at least one discontinuity point $\xi\in\left[a,b\right]$
of $g$ at which the operator $\Op$ is nonsingular, i.e. $p_{n}\left(\xi\right)\neq0$,
then vanishing of the first $\tau+d_{n}-n$ moments $\left\{ m_{k}\right\} _{k=0}^{\tau+d_{n}-n-1}$
of $g$ implies $g\equiv0$.
\item Otherwise, let $\Lambda\left(\Op\right)$ denote the largest positive
integer characteristic exponent of $\Op$ at the point $\infty$.
In fact, the indicial equation of $\Op$ at $\infty$ is $q_{\alpha}\left(k\right)=0$.
Then the vanishing of the first $\Lambda\left(\Op\right)+1+d_{n}-n$
moments of $g$ implies $g\equiv0$.
\end{enumerate}
\end{theorem}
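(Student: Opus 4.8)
The plan is to show that the vanishing of the prescribed number of leading moments forces \emph{every} moment $m_{k}$ to vanish, and then to close by the classical moment problem: polynomials are uniformly dense in $C[a,b]$, so $m_{k}=\int_{a}^{b}x^{k}g\,\D x=0$ for all $k$ forces $g=0$ a.e.\ on $[a,b]$, hence $g\equiv0$. Thus everything reduces to the recurrence \eqref{eq:mom-rec-compact}, and two facts are used throughout: under the hypothesis condition \eqref{eq:poinc-sys-cond} holds, so $\alpha=\alpha_{n}=d_{n}-n$ (cf.\ \prettyref{lem:cond-for-poincare-syst}); and the inhomogeneity $\e_{k}=\sum_{\ell=0}^{\np+1}\sum_{j=0}^{n-1}x_{\ell}^{k-j}\ff{k}{j}c_{\ell,j}$ is an exponential polynomial carried by $\{x_{0},\dots,x_{\np+1}\}$ with polynomial factors of degree $<n$, hence satisfies the constant-coefficient recurrence with characteristic polynomial $\Theta_{{\cal S}}(\sigma)=\prod_{j}(\sigma-x_{j})^{n}$ of order $\tau=n(\np+2)$, cf.\ \eqref{eq:char-poly-moments-const-part}--\eqref{eq:rec-for-rhs-nonhom}, the sequences $\ff{k}{j}x_{\ell}^{k-j}$ being linearly independent. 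The first step is bookkeeping: if $m_{0}=\dots=m_{N-1}=0$, then reading \eqref{eq:mom-rec-compact} for every $k$ with $k+\alpha\le N-1$ (and using $m_{i}=0$ for $i<0$) gives $\e_{0}=\dots=\e_{N-1-\alpha}=0$.

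In part (1), where some discontinuity $\xi$ has $p_{n}(\xi)\ne0$, the count $N=\tau+d_{n}-n$ is chosen exactly so that the bookkeeping step produces $\e_{0}=\dots=\e_{\tau-1}=0$; plugging these into the order-$\tau$ recurrence for $\{\e_{k}\}$ propagates $\e_{k}=0$ for all $k$, and linear independence forces every $c_{\ell,j}=0$. Now I would invoke the structure from \cite{bat2008}: for fixed $x_{\ell}$ the vector $(c_{\ell,0},\dots,c_{\ell,n-1})$ is obtained from the jump vector $([g]_{x_{\ell}},\dots,[g^{(n-1)}]_{x_{\ell}})$ by a linear map built from the values $p_{m}^{(i)}(x_{\ell})$, invertible whenever $p_{n}(x_{\ell})\ne0$. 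Hence at $\xi$ all jumps of $g,g',\dots,g^{(n-1)}$ vanish; since $g$ solves $\Op g=0$, nonsingular at $\xi$, on both sides of $\xi$, uniqueness continues $g$ across $\xi$ as one analytic solution. Iterating across all discontinuities and using the vanishing boundary data at an endpoint, $g$ becomes a global solution of $\Op g=0$ with zero Cauchy data, so $g\equiv0$.

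In part (2) no discontinuity of $g$ is a nonsingular point of $\Op$, the shortcut of part (1) is gone, and I would instead make \eqref{eq:mom-rec-compact} homogeneous and run it forward: $m_{k+\alpha}=q_{\alpha}(k)^{-1}\big(\e_{k}-\sum_{\ell=-n}^{\alpha-1}q_{\ell}(k)m_{k+\ell}\big)$, legitimate exactly when $q_{\alpha}(k)\ne0$, i.e.\ when $k$ is not a characteristic exponent of $\Op$ at $\infty$. Since $\Lambda(\Op)$ is the largest positive integer such exponent, this holds for every $k\ge\Lambda(\Op)+1$, and the count $N=\Lambda(\Op)+1+d_{n}-n$ is precisely what makes the $\alpha+n$ moments $m_{\Lambda(\Op)+1-n},\dots,m_{\Lambda(\Op)+\alpha}$ seeding the recursion at $k=\Lambda(\Op)+1$ all lie among the prescribed vanishing ones; granting $\e_{k}\equiv0$, the recursion then forces $m_{k}=0$ for all $k$. (Alternatively, apply \prettyref{thm:pituk-system} and \prettyref{prop:eig-syst-mom} to the Poincar\'e-type system \eqref{eq:moment-rec-system} satisfied by $\vec{w}(k)$.)

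The hard part, and what forces the two cases, is reconciling two competing needs: enough vanishing leading moments to force $\e_{k}\equiv0$ --- naively $\tau$ of them, via the order-$\tau$ recurrence for $\{\e_{k}\}$ --- and, in part (2), enough to carry the forward recursion past every non-negative integer characteristic exponent of $\Op$ at $\infty$, i.e.\ past $\Lambda(\Op)$. Part (1) removes the second need via the jump-data argument, so the $\tau$-cost is affordable; part (2) must instead make the $\e_{k}\equiv0$ demand cheap. I expect this to rest on the fact that in part (2) \emph{every} discontinuity of $g$ sits at a regular singular point of $\Op$, so the admissible jump data there --- hence the forms $c_{\ell,j}$ --- are severely constrained (confined to a Frobenius subspace cut out by the characteristic exponents at that point), which should either drop the effective order of the $\{\e_{k}\}$ recurrence to at most $\Lambda(\Op)+1$ or permit a direct argument that $\e_{k}\equiv0$ from the smaller count. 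Pinning this down --- identifying exactly which jump vectors a bounded, $(n-1)$-times differentiable local solution may carry across a regular singular point, and translating it into a bound on the effective order of $\{\e_{k}\}$ --- is the technical heart; the remaining steps are the bookkeeping above together with the classical moment problem.
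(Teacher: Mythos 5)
This theorem is cited from \cite{batbinZeros} and the present paper does not reprove it, so there is no in-paper proof to compare against; I can only assess your argument on its own terms.

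Your high-level machinery is sound: the bookkeeping step that converts $m_{0}=\dots=m_{N-1}=0$ into $\e_{0}=\dots=\e_{N-1-\alpha}=0$ via \eqref{eq:mom-rec-compact} is correct once one uses $\alpha=\alpha_{n}=d_{n}-n$ (which does follow from Fuchsianity via \prettyref{lem:cond-for-poincare-syst}), and the claim that $\e_{k}\equiv0$ forces all $c_{\ell,j}=0$ by linear independence of the sequences $\ff{k}{j}x_{\ell}^{k-j}$ is also fine. The final appeal to density of polynomials to go from $m_{k}\equiv0$ to $g\equiv0$ is also standard. The problem is that in both cases the chain between these two correct endpoints is broken.

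In part (1), after you obtain $c_{\xi,j}=0$ and hence zero jump data for $g,\dots,g^{(n-1)}$ at the single nonsingular discontinuity $\xi$, you write ``iterating across all discontinuities and using the vanishing boundary data at an endpoint.'' This does not go through: at the other jump points the bilinear map from the jump vector to $(c_{\ell,0},\dots,c_{\ell,n-1})$ may be singular (precisely because $p_{n}(x_{\ell})$ may vanish there), so $c_{\ell,j}=0$ does not give vanishing jumps there, and no vanishing boundary data at an endpoint has been established. What you do get from the invertibility at $\xi$ is that $g$ in fact has no first-kind discontinuity at $\xi$ --- which contradicts the hypothesis that $\xi$ is a discontinuity point of $g$, unless $g$ vanishes identically near $\xi$; turning that observation into $g\equiv0$ still needs an argument (uniqueness of the zero solution across $[a,b]$ despite possible intermediate singular points of $\Op$, or a separate propagation of $m_{k}=0$ that circumvents the zeros of $q_{\alpha}(k)$). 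As written, the step is a leap.

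In part (2), you correctly identify that the bookkeeping only yields $\e_{0}=\dots=\e_{\Lambda(\Op)}=0$, which is fewer than the $\tau$ values generically needed to annihilate a solution of an order-$\tau$ recurrence, and you explicitly flag closing that gap --- by showing the admissible jump data at a regular singular point confine $\{\e_{k}\}$ to a lower-order recurrence, or by some other device --- as ``the technical heart'' to be pinned down. That is exactly the missing lemma: without it, the count $\Lambda(\Op)+1+d_{n}-n$ does not force $\e_{k}\equiv0$, and the parenthetical appeal to \prettyref{thm:pituk-system} and \prettyref{prop:eig-syst-mom} only controls asymptotics, not vanishing. So part (2) is an outline with its central step acknowledged but not supplied.

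In short: the reduction, the bookkeeping, and the final density argument are correct; the passage from ``$c_{\ell,j}=0$'' (respectively ``$\e_{0}=\dots=\e_{\Lambda}=0$'') to ``$g\equiv0$'' is not established in either case, and these are the genuinely nontrivial parts of the theorem.
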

Everything is now in place in order to obtain the following result.
\begin{theorem}
\label{thm:moment-taylor-dom}Let $g\in{\cal PD}\left(\Op,\np\right)$
be a not identically zero function, with $\Op$ of Fuchsian type.
Then the Stieltjes transform $S_{g}\left(z\right)$ is analytic at
the origin, and the series
\[
S_{g}\left(z\right)=\sum_{k=0}^{\infty}m_{k}z^{k}
\]
converges in a disk of radius $R$ which satisfies
\[
R\geqslant R^{*}\isdef\min\left\{ \xi^{-1}:\;\xi\in Z_{A}\right\} ,
\]
where $Z_{A}$ is given by \prettyref{prop:eig-syst-mom}. Furthermore,
for every
\[
N\geqslant\max\left\{ \tau-1,\Lambda\left(\Op\right)\right\} +d_{n}-n,
\]
$S_{g}$ satisfies $\left(N,R,S\left(k\right)\right)$ Taylor domination,
where $S\left(k\right)$ is given by \prettyref{prop:Root}. \end{theorem}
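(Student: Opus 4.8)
The plan is to identify $S_g$ with the generating function of the Poincar\'e-type linear system \eqref{eq:moment-rec-system} and then chain together \prettyref{thm:pituk-system}, \prettyref{prop:eig-syst-mom}, \prettyref{thm:moment-vanishing} and \prettyref{lem:domination-for-all}. First I would note that $g\in{\cal PD}\left(\Op,\np\right)$ is by hypothesis bounded and integrable on $[a,b]$, so the Proposition preceding the definition of ${\cal PD}$ already gives that $S_g$ is analytic at the origin and equals $\sum_k m_k z^k$. Since $\Op$ is Fuchsian, $z=\infty$ is at most a regular singular point, which is precisely condition \eqref{eq:poinc-sys-cond} (as recorded in \prettyref{thm:moment-vanishing}); hence \prettyref{lem:cond-for-poincare-syst} applies and $\vec w(k)$ satisfies \eqref{eq:moment-rec-system}, a linear system of Poincar\'e type whose constant part is the matrix $A$.

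Next I would apply \prettyref{thm:pituk-system} to $\vec w(k)$. Either $\vec w(k)=\vec 0$ for $k\gg1$, and then $m_k=0$ for all large $k$ so that $\limsup_j\sqrt[j]{|m_j|}=0$; or $\lim_k\sqrt[k]{\|\vec w(k)\|}=|\xi|$ for an eigenvalue $\xi$ of $A$, which by \prettyref{prop:eig-syst-mom} lies in $Z_A$. In the latter case, since $m_{k+\alpha-1}$ is one of the coordinates of $\vec w(k)$ we have $|m_{k+\alpha-1}|\le\|\vec w(k)\|$, hence $\limsup_j\sqrt[j]{|m_j|}\le|\xi|\le\max_{\zeta\in Z_A}|\zeta|$. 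In both cases the root test gives that the radius of convergence $R$ of $S_g$ is at least $\bigl(\max_{\zeta\in Z_A}|\zeta|\bigr)^{-1}=R^*$ (with the usual convention $0^{-1}=+\infty$), which is the asserted lower bound.

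For the Taylor domination I would use \prettyref{thm:moment-vanishing} to locate a nonvanishing moment of small index: its contrapositive shows there is $i_0$ with $m_{i_0}\ne0$ and $i_0\le\tau-1+d_n-n$ (case (1), when some discontinuity of $g$ is a nonsingular point of $\Op$) or $i_0\le\Lambda\left(\Op\right)+d_n-n$ (case (2) otherwise); in either case $i_0\le\max\{\tau-1,\Lambda\left(\Op\right)\}+d_n-n$. In particular $S_g\not\equiv0$, and for every $N$ at least this threshold the quantity $\max_{i=0,\dots,N}|m_i|\tilde R^i$ is strictly positive. Since that positivity is all the proof of \prettyref{lem:domination-for-all} actually uses — the stated hypothesis $a_N\ne0$ there being only a convenient sufficient condition for it — \prettyref{lem:domination-for-all} applies with this $N$ and yields the $\left(N,R,S\left(k\right)\right)$-Taylor domination with $S\left(k\right)$ as described.

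The routine parts are the index bookkeeping — the shift $\alpha-1$ between the index of $m$ and that of $\vec w(k)$, and passing between $\|\vec w(k)\|$ and $|m_k|$ under $k$-th roots — together with checking that the two alternatives of \prettyref{thm:moment-vanishing} exhaust all Fuchsian cases. The one point that needs slight care is the remark in the previous paragraph that \prettyref{lem:domination-for-all} remains valid for any $N$ with $\max_{i\le N}|a_i|\tilde R^i>0$, not merely for those $N$ with $a_N\ne0$; this is exactly what upgrades the conclusion from "the first nonvanishing index" to "every $N$ above the threshold $\max\{\tau-1,\Lambda\left(\Op\right)\}+d_n-n$".
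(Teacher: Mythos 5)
Your proof is correct and follows essentially the same route as the paper's: the Fuchsian hypothesis gives condition~\eqref{eq:poinc-sys-cond}, so \prettyref{lem:cond-for-poincare-syst} and \prettyref{thm:pituk-system} yield $\limsup_k\sqrt[k]{|m_k|}\le 1/R^*$, and \prettyref{thm:moment-vanishing} then furnishes a nonzero moment of index at most $N$, after which \prettyref{cor:taylor-dom-for-any-series}/\prettyref{lem:domination-for-all} gives the domination. Your observation that \prettyref{lem:domination-for-all} really needs only $\max_{i\le N}|a_i|\tilde R^i>0$ rather than $a_N\neq0$ is correct and quietly fills a gap that the paper's own proof leaves implicit, since \prettyref{thm:moment-vanishing} only guarantees some $m_k\neq0$ with $k\le N$, not $m_N\neq0$.
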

\begin{svmultproof2}
By \prettyref{lem:cond-for-poincare-syst} and \prettyref{thm:pituk-system}
it is clear that
\[
\limsup_{k\to\infty}\sqrt[k]{\left|m_{k}\right|}\leqslant\frac{1}{R^{*}}.
\]
By \prettyref{thm:moment-vanishing}, $m_{k}\neq0$ for at least some
$k=0,\dots,N$. Then we apply \prettyref{prop:Root}.
\end{svmultproof2}
In order to bound the number of zeros of $S_{g}$, we would need to
estimate the growth of the rational functions $\frac{q_{-n+s}\left(k\right)}{q_{\alpha_{n}}\left(k\right)}$,
and this can hopefully be done using some general properties of the
operator $\Op$. Then we would apply the results of \prettyref{sec:poincare-type-rec}.
We expect that in this way we can single out subclasses of ${\cal PD}$
for which uniform Taylor domination takes place. We plan to carry
out this program in a future work.
\begin{remark}
It is possible to obtain Taylor domination for the Stieltjes transforms
$S_{g}\left(z\right)$ by another method, based on Remez-type inequalities
\cite{remez1936prop,yomdin2011remez}. We plan to present these results separately.
\end{remark}
We complete this paper with one example and one question.

\smallskip

\noindent{\bf Example 2.} Consider a very special case of $D$-finite functions: polynomial $g(x)$. In this case a uniform
Taylor domination (depending only on the degree) in the maximal disk of convergence for $S_g(z)$ was obtained in
\cite{yomdin2014Bautin} via the study of the ``Bautin ideals'', generated by the moments $m_k(g)$ (considered as polynomials
in the parameters of the problem).

\smallskip

\noindent{\bf Problem 2.} {\it Moments of the form $m_k(p,Q,a,b)=\int_a^b P^k(x)q(x)dx$ play an especially important role in 
Qualitative Theory of ODEs (compare \cite{briskin2010center}). Here $P(x)$ and $q(x)$ are
univariate complex polynomials. Via change of integration variable $w=P(x)$ these moments can be reduced to the usual moments
of a piecewise-algebraic $g$, but along a certain complex curve. Does the generating function of $m_k(p,Q,a,b)$ satisfy
uniform Taylor domination (depending only on the degrees of $P$ and $q$) in its disk of convergence?}

\bibliographystyle{plain}

\end{document}